\newtheorem{lemma}{Lemma}
\newtheorem{example}{Example}
\newtheorem{theorem}{Theorem}
\renewcommand{\epsilon}{\varepsilon}
\newcommand{\calD}{\ensuremath{\mathcal D}\xspace}
\newcommand{\calM}{\ensuremath{\mathcal{M}}\xspace}
\newcommand{\calR}{\ensuremath{\mathcal{R}}\xspace}
\newcommand{\calS}{\ensuremath{\mathcal{S}}\xspace}
\newcommand{\N}{\ensuremath{\mathbb{N}}\xspace}
\newcommand{\Z}{\ensuremath{\mathbb{Z}}\xspace}
\newcommand{\bfc}{\ensuremath{\mathbf{c}}\xspace}
\newcommand{\bfd}{\ensuremath{\mathbf{d}}\xspace}
\newcommand{\bfl}{\ensuremath{\mathbf{l}}\xspace}
\newcommand{\bfq}{\ensuremath{\mathbf{q}}\xspace}
\newcommand{\bfs}{\ensuremath{\mathbf{s}}\xspace}
\newcommand{\bft}{\ensuremath{\mathbf{t}}\xspace}
\newcommand{\bfu}{\ensuremath{\mathbf{u}}\xspace}
\newcommand{\bfw}{\ensuremath{\mathbf{w}}\xspace}
\newcommand{\bfx}{\ensuremath{\mathbf{x}}\xspace}
\newcommand{\bfy}{\ensuremath{\mathbf{y}}\xspace}
\newcommand{\bfz}{\ensuremath{\mathbf{z}}\xspace}
\newcommand{\nsset}[1]{\{#1\}} 
\newcommand{\nset}[2]{\{\,#1\mid#2\,\}} 
\newcommand{\bset}[2]{\bigl\{\,#1\mid #2 \,\bigr\}}
\newcommand{\nparen}[1]{(#1)}
\newcommand{\bparen}[1]{\bigl(#1\bigr)}
\newcommand{\nabs}[1]{\lvert #1\rvert}
\newcommand{\qtext}[1]{\quad\text{#1}\quad}
\newcommand{\qqtext}[1]{\qquad\text{#1}\qquad}
\newcommand{\ktoinf}{k \rightarrow \infty}
\newcommand{\ntoinf}{n \rightarrow \infty}
\newcommand{\word}[1]{\ensuremath{\mathtt{#1}}\xspace}
\renewcommand{\o}{\ensuremath{\mathtt{0}}\xspace}
\renewcommand{\i}{\ensuremath{\mathtt{1}}\xspace}
\newcommand{\ka}{\ensuremath{\mathtt{2}}\xspace}
\newcommand{\ko}{\ensuremath{\mathtt{3}}\xspace}
\newcommand{\bina}{\nsset{\o,\i}}
\begin{document}	
\title{Extremal words in morphic subshifts}

\author{James D. Currie\thanks{j.currie@uwinnipeg.ca}
\and
Narad Rampersad\thanks{narad.rampersad@gmail.com}
\and
Kalle Saari\thanks{kasaar2@gmail.com}
\\
Department of Mathematics and Statistics\\
University of Winnipeg\\
515 Portage Avenue\\
Winnipeg, MB, R3B 2E9, Canada
\\\\
Luca Q. Zamboni\thanks{lupastis@gmail.com}\\
FUNDIM\\
University of Turku, Turku\\
FIN-20014, Finland\\
and\\
Institut Camille Jordan\\
Universit\'e Lyon 1, France}

\date{}

\maketitle

\begin{abstract}
Given an infinite word $\bfx$ over an alphabet $A$, a letter $b$ occurring in $\bfx$,  and a total order $\sigma$ on $A$, we call the
smallest word with respect to $\sigma$ starting with $b$ in the shift orbit closure of $\bfx$ 
an \emph{extremal word} of~$\bfx$. In this paper we consider the extremal words of morphic words.
If $\bfx = g(f^{\omega}(a))$ for some morphisms $f$ and $g$, we give two simple conditions on $f$ and $g$ that guarantees that all extremal words are morphic.
This happens, in particular, when $\bfx$ is a primitive morphic or a  binary pure morphic word. 
Our techniques provide characterizations of the extremal words of the Period-doubling word and the Chacon word
and give a new proof of the form of the lexicographically least word in the shift orbit closure of the Rudin-Shapiro word.
\end{abstract}
\textbf{keywords:} 
Lexicographic order, morphic word, primitive morphic word, extremal word, Period-doubling word, Chacon word, Rudin-Shapiro word

\section{Introduction}

Given an infinite word $\bfx \in A^{\N}$, it is natural to inquire about the nature of the lexicographically least words in its shift orbit closure.
We call a word $\bfy$ in the shift orbit closure of $\bfx$ \emph{extremal} if there exists a total order $\sigma$ on $A$ and a letter $b\in A$ such that
$\bfy$ is the least word with respect to $\sigma$ beginning in $b$.

We get different extremal words depending on the choice of  $\sigma$ and $b$.
For example, if $A = \{\o, \i\}$ and $\bfx$ is a Sturmian word, it is well-known that the extremal words with respect to $\o < \i$ 
are $\o\bfc$ and $\i\o\bfc$, where $\bfc$ is the characteristic word whose slope equals that of $\bfx$, and if we order $\i < \o$, then the extremal words are 
$\i \bfc$ and $\o\i\bfc$; see, for example,~\cite{Pirillo2005}. As another example, if $\bfx$ is $k$-automatic for some positive integer $k$, then its extremal words are $k$-automatic
as well~\cite{AllRamSha2009}. We refer the reader to~\cite{Allouche1983,AllCurSha1998,AllCos1983,Gan2000} for related results.

These considerations motivate the following question: given a morphic word, is it true that the corresponding extremal words are morphic, too?
While we are not able to solve the question in full generality, we give two fairly general 
classes of morphic words in which the answer is affirmative:
In Theorem~\ref{su 04-11-2012 12:52} we show that if $\bfx\in A^{\N}$  is a pure morphic word generated by
a morphism in a certain set $\calM_{\bfx}$ defined in~\eqref{ti 15-01-2013 09:55},  then all extremal words of $\bfx$ are morphic.
In Theorem~\ref{2013-07-13 19:29} we show that if $\bfx$ is a primitive morphic word, that is, a morphic image of a fixed point of a primitive morphism,
then the extremal words are primitive morphic as well.
We will also show that the extremal words of all binary pure morphic words are morphic (Theorem~\ref{ma 14-01-2013 10:22}). 
Finally, we give characterizations of the extremal words of the Period-doubling word (Theorem~\ref{14-01-2013 15:05}) and Chacon word (Theorem~\ref{14-01-2013 15:07}).
Along the way we show that if $\bfx$ is a pure morphic word generated by a morphism $f$ and $\bft$ is in the shift orbit closure of $\bfx$ such that $f(\bft) = \bft$, then $\bft$ is morphic (Theorem~\ref{su 13-01-2013 16:00}).

\section{Preliminaries} 

We will follow the standard terminology and notation of combinatorics on words as established, for example, in~\cite{AllSha2003,Lothaire2002}.

Let $A$ be a finite alphabet. We write $A^{\N}$  for the set of all infinite words over $A$. If $X \subset A^{*}$, then $X^{\omega}$ denotes the set of all infinite words obtained by a concatenation of words in $X$.

If $f \colon A^{*} \rightarrow A^{*}$ is a morphism such that $f(a) = ax$ for some letter $a\in A$ and a word $x\in A^{+}$ such that $f^{n}(x) \neq \epsilon$, the empty word, for all $n\geq 0$, then there exists an infinite word $f^{\omega}(a) := \lim_{\ntoinf} f^{n}(a)$ such that $f^{n}(a)$ is a prefix of $f^{\omega}(a)$ for all $n\geq 0$, and it is called a \emph{pure morphic word generated by} $f$. If $c \colon A^{*} \rightarrow B^{*}$ is a coding, that is  a letter-to-letter morphism, then $c(f^{\omega}(a))$ is called a \emph{morphic word}.
Notice that $f^{\omega}(a)$ is a fixed point of $f$, that is $f\bparen{f^{\omega}(a)} = f^{\omega}(a)$, but in general a fixed point of a morphism is not necessarily generated by the morphism (however, see Theorem~\ref{su 13-01-2013 16:00}).

A  morphism $f$ is called \emph{primitive} if there exists an integer $k\geq1$ such that $b$ occurs in $f^k(a)$ for all pairs $(a,b) \in A \times A$.
An infinite word of the form $h\bparen{f^{\omega}(a)}$, where $f$ is primitive and $h \colon A^{*} \rightarrow B^{*}$ is an arbitrary morphism, 
is called a \emph{primitive morphic} word.

It is clear that all ultimately periodic sequences are morphic.
The following result on morphic words is well-known,
see Theorems~7.6.1 and~7.6.3  and Corollary~7.7.5 in~\cite{AllSha2003}.

\begin{lemma}\label{ke 19-12-2012 12:32}
Let $\bfx \in A^{\N}$ be a morphic word, $w\in A^*$, and $g \colon A^* \rightarrow B^*$  a non-erasing morphism.
Then the words $w \bfx$, $w^{-1} \bfx$, and $g(\bfx)$ are morphic.
\end{lemma}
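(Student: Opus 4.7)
Write $\bfx = c(f^{\omega}(a))$ with $f \colon C^* \to C^*$ prolongable on $a$, $f(a) = a\alpha$ for some $\alpha \in C^+$, and $c \colon C^* \to A^*$ a coding. My plan for each of the three claims is to adjoin auxiliary letters to $C$ and produce a new morphism $\tilde f$, prolongable on a designated starting letter, whose iteration---followed by a coding---yields the desired word.

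For $w\bfx$ with $w = w_0 w_1 \cdots w_{n-1}$, I would adjoin fresh letters $d_0, \ldots, d_{n-1}, \tilde a$ and set $\tilde f(d_0) = d_0 d_1$, $\tilde f(d_i) = d_{i+1}$ for $1 \le i \le n-2$, $\tilde f(d_{n-1}) = \tilde a$, $\tilde f(\tilde a) = \alpha$, and $\tilde f(b) = f(b)$ for $b \in C$. A direct induction gives $\tilde f^{\omega}(d_0) = d_0 d_1 \cdots d_{n-1}\, \tilde a\, \alpha\, f(\alpha)\, f^2(\alpha) \cdots$, so the coding $d_i \mapsto w_i$, $\tilde a \mapsto c(a)$, $b \mapsto c(b)$ sends $\tilde f^{\omega}(d_0)$ to $w \cdot c(f^{\omega}(a)) = w\bfx$.

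For $w^{-1}\bfx$, since $c$ is letter-to-letter, $w$ lifts to a unique prefix $u$ of $f^{\omega}(a)$ with $c(u) = w$, and $w^{-1}\bfx = c(u^{-1} f^{\omega}(a))$. I would pick $k$ with $|f^k(a)| \ge |u|$, write $f^k(a) = uv$, and use $f^k(f^{\omega}(a)) = f^{\omega}(a)$ to get $u^{-1} f^{\omega}(a) = v \cdot f^k(a^{-1} f^{\omega}(a))$. The same marker-letter device as above, now with $v$ in place of $w$ and with the final marker mapped to $f^k(\alpha)$ rather than $\alpha$, presents $u^{-1} f^{\omega}(a)$ as a pure morphic word; composing with $c$ then handles $w^{-1}\bfx$.

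For $g(\bfx)$, set $h = g \circ c$, which is non-erasing. Take $D = \{(b,i) : b \in C,\ 1 \le i \le |h(b)|\}$, with coding $\tilde c(b,i)$ the $i$-th letter of $h(b)$. Writing $f(b) = d_1 \cdots d_\ell$ and $E(d) := (d,1)(d,2) \cdots (d,|h(d)|)$, define $\tilde f(b,1) = E(d_1) E(d_2) \cdots E(d_\ell)$ and $\tilde f(b,i) = \epsilon$ for $i > 1$. The key identity $\tilde f(E(b)) = E(f(b))$ propagates to all words and yields $\tilde f^n(a,1) = E(f^n(a))$ for $n \ge 1$, so $\tilde f^{\omega}(a,1) = E(f^{\omega}(a))$ is well-defined and codes to $h(f^{\omega}(a)) = g(\bfx)$. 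The main point to check---and the only nontrivial step in the whole argument---is that although $\tilde f$ is erasing on the letters $(b,i)$ with $i > 1$, the growth hypothesis required for prolongability of $\tilde f$ on $(a,1)$ still holds; this reduces cleanly to the given hypothesis $f^n(\alpha) \ne \epsilon$ for all $n$.
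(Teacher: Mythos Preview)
The paper does not give a proof of this lemma at all: it states that the result is well known and cites Theorems~7.6.1 and~7.6.3 and Corollary~7.7.5 of Allouche--Shallit, \emph{Automatic Sequences}. Your explicit constructions are essentially the standard ones found in that reference (adjoin marker letters for prefixing/deleting a finite word; ``inflate'' each letter into a block for the non-erasing morphic image), so you are supplying what the paper omits rather than diverging from it.

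Two small points of care in your sketch. First, your marker construction for $w\bfx$ breaks when $|w|=1$: the rule $\tilde f(d_0)=d_0d_1$ then conflicts with $\tilde f(d_{n-1})=\tilde a$, and $d_1$ does not even exist. The obvious repair $\tilde f(d_0)=d_0\tilde a$ handles this case. Second, in the $w^{-1}\bfx$ case, if you literally carry over the extra letter $\tilde a$ with $\tilde f(\tilde a)=f^k(\alpha)$ and markers $e_0,\ldots,e_{m-1}$ for the letters of $v$, the limit word is $e_0\cdots e_{m-1}\,\tilde a\,f^k(\alpha)f^{k+1}(\alpha)\cdots$, and there is no legitimate target for the coding on $\tilde a$ (you would need $\tilde a\mapsto\epsilon$). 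The clean fix is to choose $k$ with $|f^k(a)|>|u|$ so that $v\neq\epsilon$, let $\tilde a$ stand for the \emph{last} letter of $v$, and use only $m-1$ marker letters for the rest of $v$; then the coding sends $\tilde a$ to that last letter and everything goes through. Neither issue is a real gap; the underlying argument is correct.
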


Let $\bfx \in A^{\N}$ be an infinite word. The set of factors of $\bfx$ is denoted by $F(\bfx)$.
We denote by $\calS_{\bfx}$ the set of all infinite words $\bfy \in A^{\N}$ such that $F(\bfy) \subseteq F(\bfx)$. Thus $\calS_{\bfx}$ is the \emph{shift orbit closure} of $\bfx$.
Endowed with the shift map $T \colon A^{\N} \rightarrow A^{\N}$, $S_{\bfx}$ becomes a symbolic dynamical system, more precisely a subshift,
and we denote this $(S_{\bfx}, T)$.

Now we are ready for a key definition of this paper.
Let $f \colon A^{*} \rightarrow B^{*}$ be a morphism and $\bfx \in A^{\N}$. We will write 
\begin{equation}\label{ti 15-01-2013 09:55}
f \in \calM_{\bfx} 
\end{equation}
if the following condition holds:
for each letter $b\in A$, there exists a finite word $p_{b} \in B^{+}$ such that if $\bfy \in \calS_{\bfx}$ begins with~$b$, then $f(\bfy)$ begins with $p_{b}$, and if $a\in A$ with $a\neq b$, then neither of $p_{a}$ and $p_{b}$ is a prefix of the other. Notice that then $f$ is necessarily non-erasing.

\begin{example}
Let us illustrate the above definition with a morphism appearing in~\cite{Krieger2008}.
Let $f$ be given by $\o \mapsto \o\ka$, $\i \mapsto \o\ka$, and $\ka \mapsto \i$, and let $\bfx$ be the unique fixed point of $f$. 
It is easy to see that if $\o \bfy \in \calS_{\bfx}$, then $\bfy$ must begin with $\ka$; hence $f(\o \bfy)$ begins with $\o\ka \i$.
Similarly, if $\i \bfy \in \calS_{\bfx}$, then $\bfy$ must begin with $\o$; hence $f(\i \bfy)$ begins with $\o\ka \o$. Finally, 
$f(\ka \bfy)$ begins with $\i$ regardless of $\bfy$. Therefore we may let $p_{\o} = \o \ka \i$, $p_{\i} = \o\ka \o$, and $p_{\ka} = \i$, and consequently $f \in \calM_{\bfx}$.
\end{example}

\begin{example}
Let $f$ be the morphism 
$\o \mapsto \o\i\o$, $\i \mapsto \ka\i$, $\ka \mapsto \ka\i\i$,
and let $\bfx = f^{\omega}(\o)$. Now we have $f \not \in \calM_{\bfx}$ because 
$f(\i\o\cdots) = \ka\i\o\cdots $ and $f(\i\ka\cdots) = \ka\i\ka\cdots$, so that if $p_{\i}$ existed, it would have to be a prefix of $\ka\i$,
which is a prefix of $f(\ka)$. Therefore no matter how $p_{\ka}$ is chosen, one of $p_{\i}$ and $p_{\ka}$ is necessarily a prefix of the other.%
\end{example}

Let $\sigma = \sigma_A$ be a total order on an alphabet $A$, that is, a transitive and antisymmetric relation for which either $(a,b)$ or $(b,a)$ is in $\sigma$ for all distinct letters $a,b \in A$. If $(a,b) \in \sigma$, we denote $a <_{\sigma} b$.
The order $\sigma$ extends to a lexicographic order on finite and infinite words over $A$ in the usual way.
Let $a \in A$ be a letter and $\bfx\in A^{\N}$ an infinite word in which $a$ occurs.
Then there exists a unique lexicographically smallest word in $\calS_{\bfx}$ with respect to $\sigma$ that begins with the letter~$a$,
and we will denote it by 
\[
\bfl_{a, \sigma,\bfx}.
\] 
Words of this form are  collectively called the \emph{extremal words} of $\bfx$ or $\calS_{\bfx}$.
We also denote by $\bfs_{a, \sigma, \bfx}$ the infinite word obtained from $\bfl_{a, \sigma, \bfx}$ by erasing the first letter, that is,
\[
\bfl_{a, \sigma,\bfx} = a \, \bfs_{a, \sigma, \bfx}.
\]

For the remainder of this section, let us fix a morphism $f \colon A^{*} \rightarrow A^{*}$. 
A word $u \in A^{*}$ is called \emph{bounded under}~$f$ if there exists a constant $k > 0$ such that $\nabs{f^{n}(u)} < k$ for all $n\geq 0$.
It is clear that every letter occurring in a bounded word is bounded. Let $B_{f} \subset A$ denote the set of bounded letters; the letters in $C_{f}: = A\setminus B_{f}$ are said to be \emph{growing under}~$f$.

The following result is proved in~\cite[Prop.~4.7.62]{CasNic2010}.

\begin{lemma}\label{ti 18-12-2012 15:05}
Suppose that  $\bfx\in A^{\N}$ is a pure morphic word generated by $f$.
There exists a finite subset $Q$ of $C_{f} \times B_{f}^{*} \times B_{f}^{*} \times B_{f}^{*} \times B_{f}^{*} \times B_{f}^{*} \times C_{f}$
such that $F(\bfx)\cap C_{f} B_{f}^{*} C_{f}$ equals the set of all words of the form $c_{1} y_{1}z_{1}^{k} x z_{2}^{k} y_{2} c_{2}$ with 
$(c_{1}, y_{1}, z_{1}, x, z_{2}, y_{2}, c_{2}) \in Q$ and $k\in \N$.
\end{lemma}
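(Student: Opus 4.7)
The strategy is to analyze each boundary factor $w = c_1 u c_2 \in F(\bfx) \cap C_f B_f^* C_f$ by iterated desubstitution and to extract the pumping form from the eventual periodicity of all the finite data that govern it.

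First, I would establish the key structural observation that $b \in B_f$ forces $f(b) \in B_f^*$, since a growing letter inside $f(b)$ would make $|f^n(b)| \to \infty$. Given a boundary factor $w$ of $\bfx$, pick the smallest $n$ with $w \in F(f^n(a))$ and write $w$ as a factor of $f(v)$ for a minimal factor $v$ of $f^{n-1}(a)$. Then either $v$ is a single letter---so $|w| \leq \max_e |f(e)|$ and $w$ lies in a fixed finite set $I$ of ``internal'' boundary factors---or $v = c'_1 u' c'_2 \in F(\bfx) \cap C_f B_f^* C_f$ is itself a boundary factor (the intermediate letters of $v$ are bounded by the structural observation, and the outer ones growing since their $f$-images carry $c_1, c_2$). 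In the latter case, $c_1$ must be the last growing letter of $f(c'_1)$ and $c_2$ the first of $f(c'_2)$; writing $f(c'_1), f(c'_2)$ around these extremal growing letters with $\lambda(c'_1), \rho(c'_2) \in B_f^*$ the bounded suffix and prefix, one obtains the derivation
\[
w = \Phi(c'_1) \cdot \lambda(c'_1) \cdot f(u') \cdot \rho(c'_2) \cdot \Psi(c'_2) =: D(v),
\]
where $\Phi, \Psi \colon C_f \to C_f$ extract the last and first growing letters of $f(\cdot)$. By induction on the minimal level, every boundary factor of $\bfx$ equals $D^n(v_0)$ for some $v_0 \in I$ and $n \geq 0$.

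Unrolling this yields
\[
D^n(c'_1 u' c'_2) = \Phi^n(c'_1) \cdot L_n \cdot f^n(u') \cdot R_n \cdot \Psi^n(c'_2),
\]
with $L_n = \lambda(\Phi^{n-1}(c'_1)) \cdot f(\lambda(\Phi^{n-2}(c'_1))) \cdots f^{n-1}(\lambda(c'_1))$ and $R_n$ its mirror. Since $\Phi, \Psi$ act on the finite set $C_f$ their orbits are eventually periodic in $n$, and since every $b \in B_f$ has a bounded $f$-orbit the sequences $i \mapsto f^i(b)$ are eventually periodic in $i$. Choose $P$ a common multiple of all these periods and $N$ greater than all the preperiods. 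For $n = n_0 + kP$ with fixed $n_0 \in \{0, 1, \ldots, N+P-1\}$ and varying $k \geq 0$, the letters $\Phi^n(c'_1), \Psi^n(c'_2)$ and the word $f^n(u')$ are independent of $k$, and $L_n$ factors as $y_1 z_1^k y'_1$ with $R_n = y'_2 z_2^k y_2$ symmetrically, where $z_1, z_2$ are the fixed blocks added to $L_n, R_n$ when $n$ advances by one period $P$ once all $\Phi$- and $f$-orbits have entered their cycles. Putting $c_1 = \Phi^n(c'_1)$, $c_2 = \Psi^n(c'_2)$, and $x = y'_1 \cdot f^n(u') \cdot y'_2$ gives $D^n(v_0) = c_1 y_1 z_1^k x z_2^k y_2 c_2$; enumerating over the finitely many pairs $(v_0, n_0)$ produces the required finite set $Q$, and the converse inclusion (every such pumped word lies in $F(\bfx)$) is immediate from the construction.

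The main technical subtlety is verifying that $L_n$ really splits as $y_1 z_1^k y'_1$ with the block $z_1$ iterated literally $k$ times: this requires choosing $P$ as a common multiple of the $\Phi, \Psi$-cycle lengths and of each bounded letter's $f$-orbit period, and $N$ at least the maximum preperiod, after which the word added to $L_n$ on each $P$-step becomes combinatorially constant and, combined with the $f^P$-image of the previous $L_n$, reorganises into one more copy of $z_1$.
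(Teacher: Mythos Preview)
The paper does not prove this lemma at all: it is quoted verbatim as Proposition~4.7.62 of Cassaigne--Nicolas~\cite{CasNic2010}, with no argument given. So there is no ``paper's own proof'' to compare your attempt against.

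That said, your sketch is a reasonable reconstruction of the standard argument. The desubstitution step---every boundary factor $c_1uc_2$ is either short (a factor of some $f(e)$) or is $D(v)$ for a boundary factor $v$ one level down---is correct, as is the unrolled formula for $D^n$ and the observation that $\Phi,\Psi$ and the $f$-orbits of bounded letters are all eventually periodic. The point you flag as the ``main technical subtlety'' is genuinely the crux and deserves more care than you give it: from $L_{n+P} = \bigl(\text{new prefix}\bigr)\cdot f^P(L_n)$ you want $f^P(L_n)=L_n$, but the low-index terms $f^i(\ell_{n-1-i})$ with $i$ below the $f$-preperiod need not be fixed by $f^P$. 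One clean fix is to take $n_0$ large enough that \emph{every} term $f^i(\ell_{n_0-1-i})$ already lies in the periodic part of its $f$-orbit (possible since the $\ell_j$ range over a finite set of bounded words), after which $f^P(L_n)=L_n$ holds for all $n\ge n_0$ in the residue class and the recursion $L_{n+P}=z_1\cdot L_n$ gives the pumping with $y_1=\epsilon$. The handful of boundary factors with level below this enlarged $n_0$ are then absorbed into $Q$ as degenerate tuples with $z_1=z_2=\epsilon$. With that adjustment your argument goes through.
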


\begin{lemma}\label{ke 19-12-2012 11:58}
Suppose that  $\bfx\in A^{\N}$ is a pure morphic word generated by $f$.
If $\bfz \in \calS_{\bfx} \cap B_{f}^{\N}$, then
$\bfz$ is ultimately periodic.
\end{lemma}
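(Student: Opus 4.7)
The plan is to use Lemma~\ref{ti 18-12-2012 15:05} to describe the arbitrarily long bounded factors of $\bfx$ that must contain the prefixes of $\bfz$, and then a pigeonhole plus case analysis to conclude ultimate periodicity of $\bfz$. First I would dispose of the degenerate case when $\bfx$ has only finitely many growing letters: in this case $\bfx$ decomposes as $u\bfv$ with $u$ finite and $\bfv\in B_f^\N$, and because $f$ sends $B_f^*$ into itself while keeping iterates of each bounded word bounded, the sequence $(f^k(x))_{k\geq 0}$ (where $f(a)=ax$, with $x$ bounded in this case) takes only finitely many values and is therefore eventually periodic. Hence $\bfv = x\, f(x)\, f^2(x)\cdots$ is ultimately periodic. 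Since $\bfz\in B_f^\N$ shares no letter with the finitely many growing letters of $\bfx$, every factor of $\bfz$ is a factor of $\bfv$, forcing $\bfz$ itself to be ultimately periodic.

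Assuming now $\bfx$ has infinitely many growing letters, I would represent $\bfz$ as a limit $\bfz=\lim_{i\to\infty}T^{s_i}(\bfx)$ for some increasing sequence $(s_i)$. The length-$i$ prefix $w_i$ of $\bfz$ then occurs as $\bfx[s_i..s_i+i-1]$ and lies in $B_f^*$. Let $\ell_i\leq s_i$ and $r_i\geq s_i+i$ denote the positions of the nearest growing letters in $\bfx$ on either side, which exist because $\bfx$ starts with a growing letter and has infinitely many more. Then $\bfx[\ell_i+1..r_i-1]\in B_f^*$ is sandwiched between two growing letters, so by Lemma~\ref{ti 18-12-2012 15:05} it has the form $y_1^{(i)}(z_1^{(i)})^{k_i} x^{(i)} (z_2^{(i)})^{k_i} y_2^{(i)}$ for some tuple in the finite set $Q$. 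Pigeonhole then lets me pass to a subsequence along which this tuple is fixed, say $(c_1,y_1,z_1,x,z_2,y_2,c_2)$, and $k_i\to\infty$ (otherwise the sandwiched factor would have bounded length, contradicting $|w_i|=i\to\infty$).

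Let $p_i=s_i-\ell_i-1$ be the offset of $w_i$ within this sandwiched word. Further pigeonholes, on the residue of $p_i$ modulo $|z_1|$ (and modulo $|z_2|$) and on which of the three segments $y_1 z_1^{k_i}$, the singleton $x$, or $z_2^{k_i} y_2$ contains the start and end of $w_i$, reduce the problem to a single case-consistent subsequence. In each of the three resulting cases---$w_i$ lies entirely in $y_1 z_1^{k_i}$, or $w_i$ contains the $x$-block, or $w_i$ lies entirely in $z_2^{k_i} y_2$---the fact that $|w_i|\to\infty$ while the occurrence is structured around the periodic blocks $z_1^{k_i}$ and $z_2^{k_i}$ forces $\bfz$ to equal a fixed finite word followed by $z_1^\omega$ or $z_2^\omega$. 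Hence $\bfz$ is ultimately periodic.

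The main obstacle is the case analysis in this final step: one must ensure that the periodic structure extracted from different $w_i$ along the refined subsequence is consistent, so that the limit $\bfz$ is truly ultimately periodic with the expected period. This requires careful bookkeeping of both the residue of $p_i$ modulo $|z_1|$ or $|z_2|$ and of how far past each landmark block of $y_1 z_1^{k_i} x z_2^{k_i} y_2$ the occurrence extends, and handling the boundary case where $z_1$ or $z_2$ is empty (so that one of the three position cases is vacuous).
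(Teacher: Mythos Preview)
Your approach is essentially the paper's: dispose of the case where $\bfx$ has a bounded suffix, then sandwich the growing prefixes of $\bfz$ between consecutive growing letters of $\bfx$, invoke Lemma~\ref{ti 18-12-2012 15:05}, pigeonhole on the finite set~$Q$, and read off ultimate periodicity. Your treatment of the degenerate case is in fact more self-contained than the paper's, which simply cites \cite[Lemma~4.7.65]{CasNic2010}; your observation that $x\in B_f^{*}$ forces $(f^k(x))_{k\ge 0}$ to be eventually periodic is correct and pleasant.

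The one place where the paper does something cleaner is exactly the endgame you flag as the main obstacle. Rather than tracking residues of the offset $p_i$ modulo $|z_1|$ and $|z_2|$ and splitting into three positional cases, the paper observes that after trimming at most $|y_1|$ letters from the front and $|y_2|$ from the back of each $u_{n_k}$, the resulting words are all factors of the single biinfinite word ${}^{\omega}z_1 . x z_2^{\omega}$. The dichotomy is then simply whether the starting positions of these factors in the biinfinite word are bounded below: if so, the limit has suffix $z_2^{\omega}$; if not, it has suffix $z_1^{\omega}$. This sidesteps all the residue bookkeeping and the empty-$z_i$ boundary cases, so you may want to adopt it.
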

\begin{proof}
Suppose that $\bfz \in \calS_{\bfx} \cap B_{f}^{\N}$.
If $\bfx$ has a suffix that is in $B_{f}^{\N}$, then it  is ultimately periodic, which is proved in~\cite[Lemma 4.7.65]{CasNic2010}, and then so is~$\bfz$.
Therefore we may assume that there are infinitely many occurrences of growing letters in~$\bfx$.
Let $u_{n}$ be a sequence of factors of $\bfx$ such that 
$u_{n}$ is a prefix of $u_{n+1}$ for all $n\geq1$ and 
$\bfz = \lim_{\ntoinf} u_{n}$.
Since the first letter of $\bfx$ is necessarily growing and $\bfx$ has infinitely many occurrences of growing letters, it follows that each $u_{n}$ 
is a factor of a word $w_{n}$ such that $w_{n} \in C_{f}B_{f}^{+}C_{f} \cap F(\bfx)$. Since the set $Q$ in Lemma~\ref{ti 18-12-2012 15:05} is finite, there exist letters  $c_{1}, c_{2} \in C_{f}$ and words $y_{1}, y_{2}, z_{1}, z_{2}, x \in B_{f}^{*}$ such that $w_{n_{k}} = c_{1} y_{1} z_{1}^{i_{k}} x z_{2}^{i_{k}} y_{2} c_{2}$ for some subsequence~$n_{k}$.
By chopping off a prefix of length $\nabs{y_{1}}$ and a suffix of length $\nabs{y_{2}}$ from $u_{n_{k}}$ if necessary, we may assume that each sufficiently long $u_{n_{k}}$ is a factor of the biinfinite word $\bfq := {}^{\omega}z_{1}. x z_{2}^{\omega}$, where the the word $x$ occurs in position~$0$.
Now we have two possibilities: If there exists an integer $j \in \Z$ such that infinitely many $u_{n_{k}}$ occurs in $\bfq$ in a position $\geq j$,
then $\lim_{\ktoinf} u_{n_{k}}$ has suffix $z_{2}^{\omega}$. If no such $j$ exists, then $\lim_{\ktoinf} u_{n_{k}}$ has suffix $z_{1}^{\omega}$. 
In the first case $\bfz$ has suffix $z_{2}^{\omega}$ and in the second case it has suffix $z_{1}^{\omega}$.
\end{proof}

Let $M_{f}\subset A$ denote the set of letters  $b$ such that $f^{i}(b)=\epsilon$ for some integer $i\geq 1$, and let $t \geq1$ be the smallest integer such that $f^{t}(b) = \epsilon$ for all $b \in M_{f}$.
Let 
\[
G_{f} = \nset{f^{t}(a)}{ \text{$a \in A$ such that $f(a) = x a y$ for some $x,y \in M_{f}^{*}$}}
\]
Notice that each word $f^{t}(a)$ in $G_{f}$ is a finite fixed point of $f$ because 
\[
f^{t}(a) = f^{t-1}(x) \cdots f(x) x a y f(y) \cdots f^{t-1}(y).
\]
In particular, all words in $G_{f}$ are bounded. The following result is by Head and Lando~\cite{HeaLan1986}, see also~\cite[Theorem 7.3.1]{AllSha2003}.

\begin{lemma}\label{la 12-01-2013 10:20}
Let $\bft \in A^{\N}$ be an infinite word. We have $f(\bft) = \bft$ if and only if at least one of the following two conditions holds:
\begin{enumerate}[(a)]
\item $\bft \in G_{f}^{\omega}$; or
\item $\bft = w 	f^{t-1}(x) \cdots f(x) x a y f(y) f^{2}(y) \cdots$ for some $w\in G_{f}^{*}$ and  $a\in A$ such that $f(a) = x a y$ with $x\in M_{f}^{*}$ and $y\notin M_{f}^{*}$.
\end{enumerate}
\end{lemma}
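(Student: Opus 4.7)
The backward direction is a direct computation. For case (a), each $g \in G_f$ is by construction a finite fixed point of $f$: if $g = f^t(a)$ with $f(a) = xay$ and $x,y \in M_f^*$, then $g = f^{t-1}(x)\cdots xay\cdots f^{t-1}(y)$, and applying $f$ merely tacks on the new mortal layers $f^t(x)=f^t(y)=\epsilon$ at each end, reproducing $g$. Any concatenation in $G_f^\omega$ is therefore fixed. For case (b), the prefix $w\in G_f^*$ is fixed for the same reason, and the tail $f^{t-1}(x)\cdots xayf(y)f^2(y)\cdots$ equals $\lim_n f^n(a)$: expanding $f^n(a) = f^{n-1}(x)\cdots xay\cdots f^{n-1}(y)$ inductively from $f(a)=xay$, the prefix stabilizes at $f^{t-1}(x)\cdots xa$ once $f^k(x)=\epsilon$ for $k\geq t$, while the suffix grows through $yf(y)f^2(y)\cdots$; the resulting infinite word is $f$-fixed by the usual continuity argument for morphic limits.

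For the forward direction, assume $f(\bft) = \bft$. I would first extract the maximal $G_f^*$-prefix of $\bft$: if $\bft$ admits $G_f^*$-prefixes of arbitrary length, then König's lemma applied to the finitely branching tree of $G_f^*$-factorizations produces a factorization in $G_f^\omega$, and we land in case (a). Otherwise let $w\in G_f^*$ be the longest such prefix and write $\bft = w\bft'$; then $f(\bft')=\bft'$ (since each $g\in G_f$ is fixed), and by maximality of $w$ no element of $G_f$ is a prefix of $\bft'$. Since $\bft'\in M_f^\omega$ would force $f^t(\bft')=\epsilon\neq\bft'$, writing $\bft'=s_1 s_2\cdots$ there is a first non-mortal letter $a := s_i$ with $s_1,\ldots,s_{i-1}\in M_f$.

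The plan is now to show that $f(a) = xay$ with $x\in M_f^*$ and $y\notin M_f^*$, and consequently $\bft' = f^{t-1}(x)\cdots f(x)xayf(y)f^2(y)\cdots$, matching case (b). Set $Q := |f(s_1\cdots s_{i-1})|$; since $s_j\in M_f$ forces $f(s_j)\in M_f^*$, we get $Q\leq i-1$ and the first $Q$ letters of $\bft'$ are mortal. I would argue that the block $f(a)$ must reach position $i$, giving $f(a) = s_{Q+1}\cdots s_{i-1}\cdot a\cdot y$ with $x := s_{Q+1}\cdots s_{i-1}\in M_f^*$. To exclude $y\in M_f^*$: if $y$ were mortal, then $f^t(a)\in G_f$ by the very definition of $G_f$, and parsing $\bft' = f^t(\bft')$ (mortal blocks contributing $\epsilon$) would place $f^t(a)$ at the very start of $\bft'$, contradicting the maximality of $w$. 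Hence $y\notin M_f^*$. Iterated use of $\bft' = f(\bft')$ then unfolds the suffix of $\bft'$ past $a$ as $yf(y)f^2(y)\cdots$, yielding case (b) with initial segment $w$.

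The main obstacle is the subclaim that the block $f(a)$ really reaches position $i$, i.e.\ $|f(s_1\cdots s_i)|\geq i$. A priori the parse of $\bft'$ could push $s_i$ into some later block $f(s_j)$ with $j>i$, a pathological configuration in which short images of early letters force blocks to overrun their own indices. Handling this requires tracking the parent-block function $k\mapsto\pi(k)$ (assigning to each position $k$ of $\bft'$ the index $j$ with $k\in f(s_j)$) under iteration, and then exploiting the absence of a $G_f$-prefix in $\bft'$ either to derive a direct contradiction or to replace the naive anchor $a=s_i$ by a suitable later letter where self-anchoring does hold. This parsing-anchor analysis is the technical heart of the proof; once it is settled, the inductive unfolding into the form (b) is routine.
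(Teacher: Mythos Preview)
The paper does not prove this lemma at all: it is quoted as a known result of Head and Lando (with a reference also to Allouche--Shallit, Theorem~7.3.1), so there is no ``paper's own proof'' to compare against. Your sketch is therefore doing more than the paper does.

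Your argument is essentially correct, and the ``main obstacle'' you flag is in fact not an obstacle. You want $|f(s_1\cdots s_i)|\ge i$, i.e.\ that position~$i$ of $\bft'$ lies inside the block $f(s_i)=f(a)$. Suppose not: then $|f(a)|<i-Q$, so every letter of $f(a)$ sits at some position in $\{Q+1,\ldots,i-1\}$ of $\bft'$, hence every letter of $f(a)$ lies in $M_f$. But then $f^{t+1}(a)=f^t(f(a))=\epsilon$, contradicting $a\notin M_f$. So the feared pathology (position~$i$ landing in a later block $f(s_j)$ with $j>i$) cannot occur. The same observation pins down the anchor exactly: the first non-mortal letter of $f(a)$ must sit at position~$i$ of $\bft'$, and that letter is $s_i=a$, giving $f(a)=xay$ with $x=s_{Q+1}\cdots s_{i-1}\in M_f^*$.

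Two minor clean-ups. First, the sentence ``$\bft'\in M_f^\omega$ would force $f^t(\bft')=\epsilon$'' should be phrased as: from $f(\bft')=\bft'$ one gets $f^n(\bft')=\bft'$ for all $n$, so in particular $f^t(\bft')$ is the infinite word $\bft'$; but if every letter is mortal then $f^t(s_j)=\epsilon$ for all $j$ and the concatenation is empty. Second, in the K\"onig step you are implicitly using that every word in $G_f$ is nonempty; this holds because $f(a)=xay$ forces $a$ to occur in every $f^n(a)$, so $a\notin M_f$ and $f^t(a)\neq\epsilon$. With these points noted, your forward direction goes through without any further ``parsing-anchor analysis''; the unfolding $p=f(p)x$ with $p=s_1\cdots s_{i-1}$ gives $p=f^{t-1}(x)\cdots f(x)x$ after $t$ iterations, and $\bft''=y f(\bft'')$ gives the tail $y f(y)f^2(y)\cdots$ since $y\notin M_f^*$ makes each $f^k(y)$ nonempty.
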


\begin{lemma}\label{la 12-01-2013 10:41}
Let $f \colon A^{*} \rightarrow A^{*}$ be a morphism. If $\bft \in A^{\N}$ can be written in the form $\bft = w x f(x) f^{2}(x)f^{3}(x) \cdots$, where  $w \in A^{*}$ and $x \notin M_{f}^{*}$, then $\bft$ is morphic.
\end{lemma}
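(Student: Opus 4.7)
The plan is to realize the suffix $x \, f(x) \, f^2(x) \cdots$ of $\bft$ as the image of a pure morphic word under a coding, and then use the closure properties in Lemma~\ref{ke 19-12-2012 12:32} to take care of the finite prefix $w$. The key trick is to enlarge the alphabet by a fresh letter $d \notin A$ and extend $f$ to a morphism $h \colon (A \cup \nsset{d})^{*} \rightarrow (A \cup \nsset{d})^{*}$ by setting $h(a) = f(a)$ for $a \in A$ and $h(d) = d\,x$. A routine induction on $n$ then yields
\[
h^{n}(d) = d \cdot x \cdot f(x) \cdot f^{2}(x) \cdots f^{n-1}(x),
\]
so that, passing to the limit, $h^{\omega}(d) = d \cdot x \cdot f(x) \cdot f^{2}(x) \cdots$.

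The first thing I would check is that $h^{\omega}(d)$ is actually a well-defined pure morphic word. Since $h(d) = d\,x$ begins with $d$, the only remaining requirement is that $h^{n}(x) \neq \epsilon$ for every $n \geq 0$. Because $h$ agrees with $f$ on $A^{*}$, this reduces to $f^{n}(x) \neq \epsilon$ for all $n$. The hypothesis $x \notin M_{f}^{*}$ means precisely that $x$ contains some letter $b \notin M_{f}$, i.e., a letter with $f^{i}(b) \neq \epsilon$ for every $i \geq 1$; since $f^{n}(b)$ appears as a factor of $f^{n}(x)$, the latter is nonempty for every $n$, and the construction is legitimate. (In particular $x \in A^{+}$, since the empty word belongs trivially to $M_{f}^{*}$.)

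To finish, I would take the coding $\pi \colon (A \cup \nsset{d})^{*} \rightarrow A^{*}$ that fixes every letter of $A$ and maps $d$ to an arbitrary letter $a_{0} \in A$. By the very definition of a morphic word, $\pi(h^{\omega}(d))$ is morphic, and it equals $a_{0} \cdot x \cdot f(x) \cdot f^{2}(x) \cdots$. Two applications of Lemma~\ref{ke 19-12-2012 12:32} then close the argument: first strip the single-letter prefix $a_{0}$ to deduce that $x \, f(x) \, f^{2}(x) \cdots$ is morphic, and then prepend the finite word $w$ to conclude that $\bft$ itself is morphic. The only delicate point is the pure morphic check in the second step; once $h^{\omega}(d)$ is known to be well-defined, everything else is routine bookkeeping using the lemma.
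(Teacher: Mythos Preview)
Your proof is correct and follows essentially the same route as the paper's own argument: introduce a fresh letter, extend $f$ by sending that letter to itself followed by $x$, observe that the resulting pure morphic word is $d\,x\,f(x)\,f^{2}(x)\cdots$, and then invoke Lemma~\ref{ke 19-12-2012 12:32} to adjust the finite prefix. The paper is terser---it does not spell out the well-definedness check or route through an explicit coding $\pi$ (a pure morphic word is already morphic via the identity coding, so one can strip $d$ directly)---but the substance is identical.
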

\begin{proof}
Let $b$ be a new letter that does not occur in $A$. Then the infinite word $b x f(x) f^{2}(x)\cdots$ is morphic as it is generated by a morphism
$g \colon (A\cup\{b\})^{*} \rightarrow (A \cup \{b\})^{*}$ for which $g(b) = bx $ and $g(a) = f(a)$ for all $a \in A$.
Thus it follows from Lemma~\ref{ke 19-12-2012 12:32} that $\bft$ is morphic.
\end{proof}

\begin{theorem}\label{su 13-01-2013 16:00}
Let $f \colon A^{*} \rightarrow A^{*}$ be a morphism, and suppose that $\bfx \in A^{\N}$ is a pure morphic word generated by $f$.
If $\bft \in \calS_{\bfx}$ satisfies $f(\bft) = \bft$, then $\bft$ is morphic.
\end{theorem}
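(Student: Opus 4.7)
The plan is to apply the Head--Lando characterization (Lemma~\ref{la 12-01-2013 10:20}) to the hypothesis $f(\bft) = \bft$, which splits the argument into the two cases (a) and (b) appearing there. In each case we will derive that $\bft$ is morphic using tools already at hand, and the hypothesis $\bft \in \calS_{\bfx}$ will only be needed in case (a).

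In case (b), we have
\[
\bft = w f^{t-1}(x)\cdots f(x) x a y f(y) f^{2}(y)\cdots
\]
for some $w \in G_{f}^{*}$ and $a \in A$ with $f(a) = xay$, $x \in M_{f}^{*}$, and crucially $y \notin M_{f}^{*}$. Setting
\[
w' = w f^{t-1}(x)\cdots f(x) x a \in A^{*},
\]
we rewrite $\bft = w' \cdot y f(y) f^{2}(y) \cdots$, and Lemma~\ref{la 12-01-2013 10:41} applies verbatim to conclude that $\bft$ is morphic. This case uses nothing about $\calS_{\bfx}$.

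In case (a), we have $\bft \in G_{f}^{\omega}$. The first observation is that every word in $G_{f}$ consists entirely of bounded letters: by construction each element of $G_{f}$ has the form $f^{t}(a)$ where $f(a) = xay$ with $x,y \in M_{f}^{*}$, so $f^{t}(a)$ is a finite fixed point of $f$ and hence bounded, and therefore so is each of its letters. Consequently $\bft \in B_{f}^{\N}$. Since we are additionally given $\bft \in \calS_{\bfx}$, Lemma~\ref{ke 19-12-2012 11:58} yields that $\bft$ is ultimately periodic. Every ultimately periodic sequence is morphic, so $\bft$ is morphic in this case as well, completing the proof.

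The only subtle step is the verification that $G_{f}^{\omega} \subseteq B_{f}^{\N}$, which is what lets us invoke Lemma~\ref{ke 19-12-2012 11:58}; otherwise the result is essentially an assembly of the preceding lemmas. There is no deep obstacle, since Lemmas~\ref{la 12-01-2013 10:20} and~\ref{la 12-01-2013 10:41} together already cover all fixed points of $f$ on the nose, and the $\calS_{\bfx}$ hypothesis is precisely what is needed to handle the one residual case in which $\bft$ lies entirely within the bounded part of the alphabet.
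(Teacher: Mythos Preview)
Your proof is correct and follows exactly the same route as the paper: apply the Head--Lando characterization, use Lemma~\ref{la 12-01-2013 10:41} for case~(b), and in case~(a) observe $G_f^{\omega}\subseteq B_f^{\N}$ so that Lemma~\ref{ke 19-12-2012 11:58} gives ultimate periodicity. Your write-up is in fact a bit more explicit than the paper's, spelling out why $G_f\subseteq B_f^*$ and how to group the prefix as $w'$.
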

\begin{proof}
According to Lemma~\ref{la 12-01-2013 10:20}, either $\bft$ is in $G_{f}^{\omega}$ or 
it is of the form 
\[
\bft = w f^{t-1}(x) \cdots f(x) x a y f(y) f^{2}(y) \cdots.
\] 
In the former case $\bft \in B_{f}^{\N}$, so $\bft$ is ultimately periodic by Lemma~\ref{ke 19-12-2012 11:58},
and thus morphic. In the latter case $\bft$ is morphic by Lemma~\ref{la 12-01-2013 10:41}.
\end{proof}

\section{First main result}

\begin{lemma}\label{ma 15-10-2012 13:24}
Let $\bfx \in A^{\N}$ be an infinite word and $f \colon A^{*} \rightarrow B^{*}$ a morphism.
If $\bfy \in B^{\N}$ is in $\calS_{f(\bfx)}$, then there exist a letter $a \in A$ and an infinite word $\bfz$ such that
$a \bfz \in \calS_{\bfx}$ and $\bfy = u f(\bfz)$, where $u$ is a nonempty suffix of $f(a)$.
\end{lemma}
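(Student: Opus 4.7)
The plan is to build $\bfz$ by a König-style extraction from occurrences of finite prefixes of $\bfy$ inside $f(\bfx)$.

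Writing $\bfx = \bfx_{0}\bfx_{1}\cdots$ with $\bfx_{i}\in A$, the hypothesis $\bfy \in \calS_{f(\bfx)}$ gives that each finite prefix of $\bfy$ is a factor of $f(\bfx) = f(\bfx_{0})f(\bfx_{1})\cdots$. Such an occurrence of the length-$n$ prefix starts within some block $f(\bfx_{i_{n}}) \neq \epsilon$ at an offset $r_{n} \in \{0,\ldots,|f(\bfx_{i_{n}})|-1\}$. Setting $a_{n} := \bfx_{i_{n}}$ and letting $u_{n}$ be the length-$(|f(a_{n})|-r_{n})$ suffix of $f(a_{n})$, I obtain a nonempty suffix $u_{n}$ of $f(a_{n})$ together with a finite word $\bfw_{n}$ satisfying $a_{n}\bfw_{n}\in F(\bfx)$ such that this prefix of $\bfy$ is a prefix of $u_{n}f(\bfw_{n})$. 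Since $A$ is finite and each $f(a)$ has finitely many nonempty suffixes, pigeonhole lets me restrict to an infinite set of $n$'s along which $a_{n} = a$ and $u_{n} = u$ are constant.

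Next I form the finitely-branching tree
\[
T \;=\; \bcset{\bfw\in A^{*}}{a\bfw \in F(\bfx) \text{ and } uf(\bfw) \text{ is a prefix of } \bfy}
\]
under prefix extension. Truncating each $\bfw_{n}$ to its longest prefix whose $uf$-image remains a prefix of $\bfy$ yields a node of $T$ of $uf$-image length at least $n - \max_{a\in A}|f(a)|$; so $T$ contains nodes of $uf$-image length tending to infinity. Let $T^{*}\subseteq T$ consist of nodes whose $T$-descendants achieve arbitrarily large $uf$-image length. The root lies in $T^{*}$, and a pigeonhole on the finitely many children of any $\bfw \in T^{*}$ shows $\bfw$ has a child again in $T^{*}$. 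A further pigeonhole on the first non-erasing letter appearing on paths from a given $\bfw \in T^{*}$ to descendants of strictly larger $uf$-length lets me recursively pick $\bfz\in A^{\N}$ all of whose finite prefixes lie in $T^{*}$ and whose $uf$-image lengths increase strictly at infinitely many prefixes. Since every finite prefix of $\bfz$ lies in $T$, one has $a\bfz\in\calS_{\bfx}$, and each $uf$-image of a finite prefix of $\bfz$ is a prefix of $\bfy$; as these lengths are monotone and unbounded, they tend to infinity and force $uf(\bfz) = \bfy$.

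The principal obstacle, and the reason for the refinement to $T^{*}$ together with the pigeonhole on non-erasing letters, is to prevent $\bfz$ from acquiring an all-erasing tail, which would collapse $uf(\bfz)$ to a proper finite prefix of $\bfy$. When $f$ is non-erasing the difficulty disappears entirely: any infinite branch of $T$ already has $uf$-length growing at least linearly with its depth, and $\bfz$ can be produced directly as a subsequential limit in the compact space $\calS_{\bfx}$ of the tails $\bfx_{i_{n}+1}\bfx_{i_{n}+2}\cdots$, with $uf(\bfz) = \bfy$ following from continuity of $f$.
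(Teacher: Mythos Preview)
Your approach is essentially the same as the paper's: locate occurrences of the length-$n$ prefixes of $\bfy$ inside $f(\bfx)$, pigeonhole on the pair $(a,u)$, and then use a K\"onig-type extraction to produce~$\bfz$. The paper does this more tersely, working directly with the words $v_n$ rather than introducing the tree $T$, but the substance is identical.

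Where you go further than the paper is in worrying about erasing morphisms. The paper's final sentence (``consequently $\bfy = uf(\bfz)$'') tacitly assumes that $|uf(z)|\to\infty$ along prefixes $z$ of $\bfz$, which can fail if the K\"onig branch acquires an all-erasing tail. Your $T^{*}$ refinement is a sensible response to this, but the sentence about ``a further pigeonhole on the first non-erasing letter'' is too compressed to be fully convincing as written: having shown that every node of $T^{*}$ has a child in $T^{*}$, you still need to explain why the recursive choice cannot be forced down an infinite all-erasing path (each of whose prefixes legitimately lies in $T^{*}$, yet whose $uf$-length never grows). A cleaner fix is to show directly that every $\bfw\in T^{*}$ has a proper extension $\bfw'\in T^{*}$ with $|uf(\bfw')|>|uf(\bfw)|$, and then take $\bfz$ as the limit of such an increasing chain. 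In any case, within the paper this subtlety is moot: the lemma is only invoked for $f\in\calM_{\bfx}$ (or for the Rudin--Shapiro morphism $h$), all of which are non-erasing, so your final paragraph already covers every application that matters here.
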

\begin{proof}
Let $L_{n}$ denote the length-$n$ prefix of $\bfy$; then $L_n$ is a factor of $f(\bfx)$ by the definition of $\calS_{f(\bfx)}$.
Consequently, if $n \geq \max_{a\in A}\nabs{f(a)}$, there exist letters $a_n, b_n \in A$ and a word $v_n \in A^{*}$ such that
$a_n v_n b_{n}$ occurs in $\bfx$ and we have $L_n = s_n f(v_n) p_n$, where $s_n$ is a nonempty suffix of $f(a_n)$ and $p_n$ is a possibly empty prefix of $f(b_{n})$.  
Since there are only finitely many different possibilities for $a_n$ and $s_n$, there exists a letter $a\in A$ and a word $u$ such that $a_{n_{i}} = a$ and $s_{n_{i}} = u$ for infinitely many $n_{i}$. 
The set of words $\{v_{n_i}\}$ being infinite, K\"onig's  Lemma implies that there exists an infinite word $\bfz$ such that 
every prefix of $\bfz$ is a prefix of some $v_{n_{i}}$.
Since each of $a v_{n_{i}}$ is a factor of $\bfx$, we have $a \bfz \in \calS_{\bfx}$.
Furthermore, since each prefix $z$ of $\bfz$ is a prefix of some $v_{n_{i}}$, the word $uf(z)$ is a prefix of $\bfy$, and consequently $\bfy = uf(\bfz)$.
\end{proof}

\begin{lemma}\label{pe 02-11-2012 14:01}
Let  $f\colon A^{*} \rightarrow B^{*}$ be a morphism and $\bfx\in A^{\N}$ such that $f \in \calM_{\bfx}$.
Let $b \in B$ be a letter  that occurs in $f(\bfx)$ and let $\rho$ be a total order on $B$.
Then there exist a total order $\sigma$ on $A$, a letter $a \in A$,
and a possibly empty proper suffix $v$ of $f(a)$ such that
\begin{equation}\label{ti 27-11-2012 15:55}
\bfs_{b, \rho, f(\bfx)} = v f(\bfs_{a, \sigma, \bfx}).
\end{equation}
\end{lemma}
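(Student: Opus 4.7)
The plan is to apply Lemma~\ref{ma 15-10-2012 13:24} directly to the extremal word $\bfl_{b, \rho, f(\bfx)} \in \calS_{f(\bfx)}$ and then manufacture a total order $\sigma$ on $A$ from the distinguishing prefixes $p_c$ supplied by the hypothesis $f \in \calM_\bfx$. That lemma yields a letter $a \in A$, an infinite word $\bfz$ with $a\bfz \in \calS_\bfx$, and a nonempty suffix $u$ of $f(a)$ such that $\bfl_{b,\rho,f(\bfx)} = u f(\bfz)$. Since $\bfl_{b,\rho,f(\bfx)}$ starts with $b$, so does $u$; writing $u = bv$, the word $v$ is a proper suffix of $f(a)$ (as $\nabs{v} = \nabs{u} - 1 < \nabs{f(a)}$), and therefore $\bfs_{b,\rho,f(\bfx)} = v f(\bfz)$. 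What remains is to exhibit a total order $\sigma$ on $A$ for which $\bfz = \bfs_{a, \sigma, \bfx}$.

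For this I would define $\sigma$ by declaring $c <_\sigma d$ iff $p_c <_\rho p_d$, where the $p_c, p_d \in B^+$ come from the definition of $\calM_\bfx$. Because for distinct letters of $A$ neither of $p_c, p_d$ is a prefix of the other, the $\rho$-lex comparison of $p_c$ and $p_d$ is strict, so this gives a genuine total order. To prove $\bfz = \bfs_{a, \sigma, \bfx}$, I would argue by contradiction: assume there is some $a\bfz' \in \calS_\bfx$ that is strictly $\sigma$-smaller than $a\bfz$, let $w$ be the longest common prefix of $a\bfz$ and $a\bfz'$, and let $c, c'$ be the letters immediately following $w$ in $a\bfz$ and $a\bfz'$ respectively, so that $c' <_\sigma c$. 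The tails starting at these letters are both in $\calS_\bfx$ by shift invariance, so the hypothesis $f \in \calM_\bfx$ forces their $f$-images to begin with $p_c$ and $p_{c'}$ respectively; combined with $p_{c'} <_\rho p_c$ and prefix-incomparability, this yields $f(\bfz') <_\rho f(\bfz)$.

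To convert this into a contradiction, write $f(a) = s u$ with $s = f(a) u^{-1}$; then $f(a \bfz') = s u f(\bfz')$ lies in $\calS_{f(\bfx)}$ because $a\bfz' \in \calS_\bfx$, so its suffix $u f(\bfz')$ is also in $\calS_{f(\bfx)}$, begins with $b$, and satisfies $u f(\bfz') <_\rho u f(\bfz) = \bfl_{b, \rho, f(\bfx)}$, violating the $\rho$-minimality of $\bfl_{b,\rho,f(\bfx)}$. Hence $\bfz = \bfs_{a, \sigma, \bfx}$ and~\eqref{ti 27-11-2012 15:55} follows. I expect the main subtlety to be the bookkeeping in transferring a strict lex inequality between the short words $p_{c'}$ and $p_c$ over $B$ to a strict inequality between the infinite images $f(\bfz')$ and $f(\bfz)$; this is exactly where the prefix-incomparability built into the definition of $\calM_\bfx$ is essential, ensuring that the position at which $p_{c'}$ and $p_c$ differ is no larger than their common length and so cannot be overwhelmed by what follows.
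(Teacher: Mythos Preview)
Your proposal is correct and follows essentially the same route as the paper: invoke Lemma~\ref{ma 15-10-2012 13:24} to write $\bfl_{b,\rho,f(\bfx)} = u f(\bfz)$ with $a\bfz \in \calS_\bfx$, define $\sigma$ via the $\rho$-comparison of the distinguishing prefixes $p_c$, and derive a contradiction from a putative $\sigma$-smaller competitor by pushing the inequality through $f$ and prepending $u$. The only cosmetic difference is that the paper compares $\bfz$ directly with $\bfs_{a,\sigma,\bfx}$ rather than an arbitrary smaller $a\bfz'$, and it leaves implicit the small verifications (that $v=b^{-1}u$ is a proper suffix of $f(a)$ and that $u f(\bfs_{a,\sigma,\bfx}) \in \calS_{f(\bfx)}$) which you spell out.
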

\begin{proof}
By Lemma~\ref{ma 15-10-2012 13:24}, we can write  $\bfl_{b, \rho,f(\bfx)} = b \bfs_{b, \rho, f(\bfx)} =  u f(\bfz)$, where $u$ is a nonempty suffix of $f(a)$ for some $a\in A$ and 
$a\bfz \in \calS_{\bfx}$.
Since $f \in \calM_{\bfx}$,  there exist words $p_x \in B^+$ for every $x \in A$ such that  $p_x \neq p_y$ whenever $x \neq y$. 
Thus we can define a total order $\sigma$ on $A$ such that, for all letters $x,y \in A$, we have $x <_{\sigma} y$ if and only if $p_{x} <_{\rho} p_{y}$. 

We claim that $\bfz = \bfs_{a, \sigma, \bfx}$.  If this is not the case, then  $\bfz >_{\sigma} \bfs_{a, \sigma, \bfx}$ because both $a\bfz$ and $a\bfs_{a, \sigma, \bfx} = \bfl_{a, \sigma, \bfx}$
are in $\calS_{\bfx}$ and $ \bfl_{a, \sigma, \bfx}$ is the smallest word in $\calS_\bfx$ starting with the letter~$a$.
Therefore  $\bfz = w y \bft$ and $\bfs_{a, \sigma, \bfx} = w x \bft'$ with   $x,y\in A$ satisfying $y  >_{\sigma} x$. 
Since $f(x \bft)$  begins with $p_{x}$ and $f(y \bft')$  begins with $p_{y}$ and neither of $p_x$ and $p_y$ is a prefix of the other,
we have $ f(y \bft) >_{\rho} f(x\bft')$ by the definition of $\sigma$, and this gives
\[
 \bfl_{b, \rho,f(\bfx)}  =  u f(\bfz) = u f(w) f(y \bft) >_{\rho} u f(w) f(x \bft') = u f(\bfs_{a, \sigma, \bfx}).
\]
But this contradicts the definition of $\bfl_{b, \rho,f(\bfx)}$ because $uf(\bfs_{a, \sigma, \bfx})$ starts with the letter $b$ and is in $\calS_{f(\bfx)}$.
Therefore we have shown that $\bfz = \bfs_{a, \sigma, \bfx}$, and so~\eqref{ti 27-11-2012 15:55} holds with $v = b^{-1} u$.
\end{proof}

\begin{lemma} \label{su 04-11-2012 12:51}
Let  $f\colon A^{*} \rightarrow A^{*}$ be a morphism and $\bfx\in A^{\N}$ such that $f \in \calM_{\bfx}$ and $f(\bfx) = \bfx$.
Then for any total order $\rho$ on $A$ and any letter $b \in A$ occurring in~$\bfx$, there
exist a total order $\sigma$ on $A$, a letter $a \in A$,  words $u, v \in A^{*}$, and integers $k,m\geq 1$ such that 
\begin{equation} \label{to 29-11-2012 10:59}
\bfs_{b, \rho, \bfx} = u f^{k}(\bfs_{a, \sigma, \bfx}) \qqtext{and} \bfs_{a, \sigma, \bfx} = v f^{m}(\bfs_{a, \sigma, \bfx}).
\end{equation}
\end{lemma}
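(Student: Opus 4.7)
The plan is to iterate Lemma~\ref{pe 02-11-2012 14:01} and then pigeonhole on the finitely many possible (letter, total order) pairs. Since $f(\bfx) = \bfx$, Lemma~\ref{pe 02-11-2012 14:01} applied to any pair $(c,\tau)$ with $c$ occurring in $\bfx$ returns a new pair $(c',\tau')$, with $c'$ again occurring in $\bfx$ (because the proof of Lemma~\ref{pe 02-11-2012 14:01} chooses $c'$ so that $c'\bfz \in \calS_\bfx$ for some $\bfz$), and a possibly empty proper suffix of $f(c)$. Therefore the recursion can always be continued.

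Concretely, I set $(a_0,\sigma_0) := (b,\rho)$ and, for each $i\ge 0$, use Lemma~\ref{pe 02-11-2012 14:01} to produce a pair $(a_{i+1},\sigma_{i+1})$ and a suffix $v_i$ of $f(a_i)$ such that
\[
\bfs_{a_i,\sigma_i,\bfx} \;=\; v_i\, f\bparen{\bfs_{a_{i+1},\sigma_{i+1},\bfx}}.
\]
Composing the first $n$ of these identities telescopes to
\[
\bfs_{b,\rho,\bfx} \;=\; w_n\, f^n\bparen{\bfs_{a_n,\sigma_n,\bfx}}, \qquad w_n := v_0\, f(v_1)\, f^2(v_2)\cdots f^{n-1}(v_{n-1}),
\]
and similarly, for any $i<j$,
\[
\bfs_{a_i,\sigma_i,\bfx} \;=\; \bparen{v_i\, f(v_{i+1})\cdots f^{j-i-1}(v_{j-1})}\, f^{j-i}\bparen{\bfs_{a_j,\sigma_j,\bfx}}.
\]

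Since there are only finitely many pairs in $A \times \{\text{total orders on }A\}$, the infinite sequence $(a_i,\sigma_i)_{i\ge 0}$ has a repetition: pick indices $0 \le i < j$ with $(a_i,\sigma_i) = (a_j,\sigma_j)$ and set $m := j-i \ge 1$. If $i \ge 1$, I take $a := a_i$, $\sigma := \sigma_i$, $k := i$, $u := w_i$, and $v := v_i f(v_{i+1}) \cdots f^{m-1}(v_{j-1})$; both required identities in~\eqref{to 29-11-2012 10:59} then read off directly from the telescopings above, using $(a_j,\sigma_j) = (a,\sigma)$ for the second one. In the remaining case $i=0$, the pair $(b,\rho)$ itself recurs at index $j$, so I take $a := b$, $\sigma := \rho$, $k := m := j$, and $u := v := w_j$; both identities collapse to the single true equation $\bfs_{b,\rho,\bfx} = w_j f^j(\bfs_{b,\rho,\bfx})$.

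I do not expect any real obstacle here: the only non-routine ingredient is Lemma~\ref{pe 02-11-2012 14:01}, which is already proved. The minor subtleties are making sure the new letter $a_{i+1}$ still occurs in $\bfx$ (so the recursion does not break), and handling the edge case $i=0$ so that the required lower bound $k\ge 1$ is maintained; both are dispatched by the observations above.
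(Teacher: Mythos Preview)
Your proposal is correct and follows essentially the same approach as the paper: iterate Lemma~\ref{pe 02-11-2012 14:01}, telescope, and pigeonhole on the finitely many pairs $(a_i,\sigma_i)$. One small inaccuracy: by Lemma~\ref{pe 02-11-2012 14:01} the word $v_i$ is a proper suffix of $f(a_{i+1})$, not of $f(a_i)$, but since you never use this fact the argument is unaffected; your explicit treatment of the edge case $i=0$ (ensuring $k\ge 1$) is in fact a detail the paper's proof leaves implicit.
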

\begin{proof}
Since $f(\bfx) = \bfx$, Lemma~\ref{pe 02-11-2012 14:01} implies that 
$\bfs_{b, \rho, \bfx} = v_0 f(\bfs_{a_1, \sigma_1, \bfx})$ 
for some total order $\sigma_1$ on $A$, a letter $a_1 \in A$, and a possibly empty suffix $v_0$ of $f(a_1)$.
By applying Lemma~\ref{pe 02-11-2012 14:01} next on $\bfs_{a_1, \sigma_1, \bfx}$ and further, we get a sequence of identities
\[
\bfs_{a_{k}, \sigma_k, \bfx} = v_{k} f(\bfs_{a_{k+1}, \sigma_{k+1}, \bfx})     \qquad  (k\geq0),
\]
where we  denote $a_{0} = b$ and $\sigma_{0} = \rho$.
Therefore,
\[
\bfs_{a_{k}, \sigma_{k}, \bfx}
= v_{k} f(v_{k+1}) \cdots f^{m - 1 }(v_{k  + m - 1}) f^{m}(\bfs_{a_{k+m}, \sigma_{k+m}, \bfx}),
\]
for all integers $k\geq 0$ and $m\geq 1$. 
Since there are only finitely many different letters and total orders on $A$,
there is a choice  of $k$ and~$m$ such that
$a_{k} = a_{k+m}$ and $\sigma_{k} = \sigma_{k+m}$.
Thus by denoting $a = a_k$, $\sigma = \sigma_k$,  
\[
u = v_{0} f(v_{1}) \cdots f^{k - 1 }(v_{k  - 1}), \qtext{and} v = v_{k} f(v_{k+1}) \cdots f^{m -1}(v_{k + m - 1}),
\]
we have the identities in~\eqref{to 29-11-2012 10:59}.
\end{proof}

\begin{lemma}\label{ke 07-11-2012 17:01}
Let  $f\colon A^{*} \rightarrow A^{*}$ be a morphism and $\bfx\in A^{\N}$ such that $f \in \calM_{\bfx}$ and $f(\bfx) = \bfx$.
Then for any total order $\rho$ on $A$ and any letter  $b \in A$ occurring in $\bfx$,
there exist a finite word $w\in A^{+}$, an infinite word $\bft \in \calS_{\bfx}$, and an integer $m\geq 1$ such that
\begin{align}
\bfl_{b, \rho, \bfx} &= w \bft \label{pe 30-11-2012 13:06}\\
\intertext{and either}
 \bft &= f^m(\bft) \label{pe 30-11-2012 13:07} \\
\intertext{or}
\bft &= \lim_{\ntoinf} x f^m(x) f^{2m}(x) \cdots f^{nm}(x) \label{pe 30-11-2012 13:08}
\end{align}
for some finite word $x \in A^+$.
\end{lemma}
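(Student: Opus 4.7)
The plan is to invoke Lemma~\ref{su 04-11-2012 12:51} to produce the two algebraic identities satisfied by $\bfs_{b,\rho,\bfx}$ and $\bfs_{a,\sigma,\bfx}$, and then to split into two cases depending on whether the word $v$ produced by that lemma is empty.

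First I apply Lemma~\ref{su 04-11-2012 12:51} to $b$ and $\rho$, obtaining a total order $\sigma$ on $A$, a letter $a\in A$, words $u,v\in A^{*}$, and integers $k,m\geq 1$ such that
\[
\bfs_{b,\rho,\bfx}=u f^{k}(\bfs)\qquad\text{and}\qquad\bfs=v f^{m}(\bfs),
\]
where I abbreviate $\bfs:=\bfs_{a,\sigma,\bfx}$. Setting $w:=bu\in A^{+}$ (nonempty because $b$ is a letter) and $\bft:=f^{k}(\bfs)$ immediately gives $\bfl_{b,\rho,\bfx}=b\,\bfs_{b,\rho,\bfx}=w\bft$, establishing~\eqref{pe 30-11-2012 13:06}. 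To see $\bft\in\calS_{\bfx}$, note that $\bfs$ is a shift of $\bfl_{a,\sigma,\bfx}\in\calS_{\bfx}$, hence $\bfs\in\calS_{\bfx}$; and since $f(\bfx)=\bfx$ we have $\calS_{\bfx}=\calS_{f^{k}(\bfx)}$, so $f^{k}(\bfs)\in\calS_{\bfx}$.

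If $v=\epsilon$, then $\bfs=f^{m}(\bfs)$, and commutativity of iterates of $f$ gives $f^{m}(\bft)=f^{m}(f^{k}(\bfs))=f^{k}(f^{m}(\bfs))=f^{k}(\bfs)=\bft$, which is exactly~\eqref{pe 30-11-2012 13:07}. If instead $v\neq\epsilon$, iterating $\bfs=vf^{m}(\bfs)$ yields
\[
\bfs = v f^{m}(v) f^{2m}(v) \cdots f^{nm}(v)\cdot f^{(n+1)m}(\bfs)\qquad(n\geq 0),
\]
and the step I expect to be the main (mild) obstacle is justifying passage to the limit: because membership in $\calM_{\bfx}$ forces $f$ to be non-erasing, each block $f^{im}(v)$ has length at least $\nabs{v}\geq 1$ and the prefixes $v f^{m}(v)\cdots f^{nm}(v)$ grow unboundedly, giving $\bfs=\lim_{\ntoinf} v f^{m}(v)\cdots f^{nm}(v)$. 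Applying the continuous morphism $f^{k}$ and setting $x:=f^{k}(v)\in A^{+}$ (nonempty by non-erasingness), so that $f^{k+jm}(v)=f^{jm}(x)$, I conclude $\bft=\lim_{\ntoinf} x f^{m}(x) f^{2m}(x)\cdots f^{nm}(x)$, which is precisely~\eqref{pe 30-11-2012 13:08}. The substantive content lives in Lemma~\ref{su 04-11-2012 12:51}; the present lemma merely unpacks the two fixed-point-like behaviors allowed by the self-similar identity $\bfs = v f^{m}(\bfs)$.
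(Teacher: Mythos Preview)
Your proof is correct and follows essentially the same approach as the paper: invoke Lemma~\ref{su 04-11-2012 12:51}, set $w=bu$ and $\bft=f^{k}(\bfs_{a,\sigma,\bfx})$, and split into the empty versus nonempty case. The only cosmetic difference is that the paper first derives $\bft = x f^{m}(\bft)$ with $x=f^{k}(v)$ and then iterates (splitting on $x=\epsilon$, which is equivalent to $v=\epsilon$ by non-erasingness), whereas you iterate at the level of $\bfs$ and then push $f^{k}$ through the limit by continuity; both routes are valid and yield the same conclusion.
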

\begin{proof}
According to Lemma~\ref{su 04-11-2012 12:51}, 
there exist a total order $\sigma$ on $A$, a letter $a \in A$,  words $u, v \in A^{*}$, and integers $k,m\geq 1$ such that 
\[
\bfs_{b, \rho, \bfx} = u f^{k}(\bfs_{a, \sigma, \bfx}) \qqtext{and} \bfs_{a, \sigma, \bfx} = v f^{m}(\bfs_{a, \sigma, \bfx}).
\]
Denote $w = bu$ and $\bft = f^k(\bfs_{a, \sigma, \bfx})$. Then $\bft \in \calS_{\bfx}$ and Eq.~\eqref{pe 30-11-2012 13:06} holds.
By denoting $x = f^k(v)$, we get $\bft = x f^m(\bft)$.
If $x =\epsilon$, then we have $\bft = f^m(\bft)$, and Eq.~\eqref{pe 30-11-2012 13:07} holds.
If $x \neq \epsilon$, then
\[
\bft = xf^m(\bft) = x f^m(x) f^{2m}(\bft) = \cdots = x f^m(x) f^{2m}(x) \cdots f^{nm}(x) f^{(n+1)m}(\bft),
\]
for all integers $n\geq 0$.
The morphism $f$ is non-erasing because $f\in \calM_{\bfx}$, and therefore the words $x f^m(x) f^{2m}(x) \cdots f^{nm}(x)$ get longer and longer as $n$ grows.
Thus Eq.~\eqref{pe 30-11-2012 13:08} holds.
\end{proof}

Here is the first main result of this paper.

\begin{theorem}\label{su 04-11-2012 12:52}
Let  $f\colon A^{*} \rightarrow A^{*}$ be a morphism. If $\bfx\in A^{\N}$ 
is a pure morphic word generated by~$f$ and $f \in \calM_{\bfx}$, 
then all extremal words in $\calS_{\bfx}$ are morphic.
\end{theorem}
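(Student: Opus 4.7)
The plan is to invoke the structure theorem for extremal words, Lemma~\ref{ke 07-11-2012 17:01}, and reduce each of the two resulting cases to a result already proved in the paper, then close up with the closure properties of morphic words from Lemma~\ref{ke 19-12-2012 12:32}.

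Fix a total order $\rho$ on $A$ and a letter $b \in A$ occurring in $\bfx$. Since $\bfx = f^\omega(a)$ is a pure morphic word generated by $f$, in particular $f(\bfx) = \bfx$, so Lemma~\ref{ke 07-11-2012 17:01} applies and yields a decomposition $\bfl_{b,\rho,\bfx} = w\bft$ with $w \in A^+$, $\bft \in \calS_{\bfx}$, an integer $m \geq 1$, and either (a) $f^m(\bft) = \bft$, or (b) $\bft = \lim_{n\to\infty} x f^m(x) f^{2m}(x) \cdots f^{nm}(x)$ for some $x \in A^+$. In both cases I want to apply a previous result to the morphism $g := f^m$ rather than to $f$ itself. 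The first thing to check is that $\bfx$ is also a pure morphic word generated by $g$: writing $f(a) = ax$ with $f^n(x) \neq \varepsilon$ for all $n \geq 0$, one has $g(a) = a \cdot x'$ where $x' = x f(x) \cdots f^{m-1}(x)$, and $g^n(x')$ contains $f^{mn}(x) \neq \varepsilon$, so $\bfx = g^\omega(a)$.

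In case (a), $\bft \in \calS_{\bfx}$ is a fixed point of $g$, and $\bfx$ is a pure morphic word generated by $g$, so Theorem~\ref{su 13-01-2013 16:00} applied to $g$ gives that $\bft$ is morphic. In case (b), the hypothesis $f \in \calM_{\bfx}$ forces $f$ to be non-erasing, hence $g = f^m$ is non-erasing as well, and therefore $M_g = \emptyset$; in particular $x \notin M_g^*$, because $x$ is nonempty. Lemma~\ref{la 12-01-2013 10:41} applied to $g$ with the trivial prefix $w = \varepsilon$ then shows that $\bft = x g(x) g^2(x) g^3(x) \cdots$ is morphic. (One should also note that in case (b) the sequence really does have divergent lengths, which again uses non-erasingness of $g$ to conclude that the limit is a well-defined infinite word.)

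Finally, in either case $\bft$ is morphic, so $\bfl_{b,\rho,\bfx} = w\bft$ is morphic by Lemma~\ref{ke 19-12-2012 12:32}. Since $b$ and $\rho$ were arbitrary, every extremal word of $\calS_{\bfx}$ is morphic. I expect no serious obstacle here: all the real work has been done in the preceding lemmas, and the main thing to verify carefully is that the auxiliary results really do transfer from $f$ to the iterate $g = f^m$, which boils down to the two observations that $\bfx = g^\omega(a)$ and that $g$ is non-erasing.
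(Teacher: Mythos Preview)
Your proof is correct and follows essentially the same route as the paper's own argument: invoke Lemma~\ref{ke 07-11-2012 17:01}, pass to the iterate $g=f^m$, and handle the two cases via Theorem~\ref{su 13-01-2013 16:00} and Lemma~\ref{la 12-01-2013 10:41} respectively. You spell out a few details the paper leaves implicit---that $\bfx$ is also generated by $g$, that non-erasingness gives $x\notin M_g^*$, and the final appeal to Lemma~\ref{ke 19-12-2012 12:32}---but the structure is identical.
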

\begin{proof}
Let $\rho$ be a total order on $A$ and $b \in A$ a letter occurring in~$\bfx$. We will show that $\bfl_{b, \rho, \bfx}$ is morphic. 
Lemma~\ref{ke 07-11-2012 17:01} says that there exist a finite word $w\in A^{+}$, an infinite word $\bft \in \calS_{\bfx}$, and an integer $m\geq 1$ such that
$\bfl_{b, \rho, \bfx} = w \bft$ and either 
$ \bft = f^m(\bft)$ or $\bft = \lim_{\ntoinf} x f^m(x) f^{2m}(x) \cdots f^{nm}(x)$ for some finite word $x\in A^+$.
Since $f^{m}$ generates $\bfx$, the claim that $\bft$ is morphic follows in the former case from Theorem~\ref{su 13-01-2013 16:00} 
and in the latter case from Lemma~\ref{la 12-01-2013 10:41}.

\end{proof}

\begin{theorem}\label{ti 04-12-2012 15:36}
Let  $f\colon A^{*} \rightarrow A^{*}$ and $g \colon  A^{*} \rightarrow B^{*}$  be  morphisms
and $\bfx\in A^{\N}$ such that $f, g \in \calM_{\bfx}$.
If $\bfx$ is a pure morphic word generated by $f$, then all extremal words in $\calS_{g(\bfx)}$ are morphic.
\end{theorem}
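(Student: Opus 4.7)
The plan is to reduce Theorem~\ref{ti 04-12-2012 15:36} to Theorem~\ref{su 04-11-2012 12:52} via Lemma~\ref{pe 02-11-2012 14:01} applied to $g$. The key observation is that the hypothesis $g\in \calM_{\bfx}$ lets us peel a single layer of $g$ off of any extremal word in $\calS_{g(\bfx)}$, and what remains is an extremal word of $\calS_{\bfx}$, which is morphic by the first main theorem.

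More concretely, fix a total order $\rho$ on $B$ and a letter $b\in B$ occurring in $g(\bfx)$; we want to show that $\bfl_{b, \rho, g(\bfx)}$ is morphic. Since $g \in \calM_{\bfx}$, Lemma~\ref{pe 02-11-2012 14:01} applied to $g$ (not $f$) produces a total order $\sigma$ on $A$, a letter $a\in A$, and a possibly empty proper suffix $v$ of $g(a)$ such that
\[
\bfs_{b, \rho, g(\bfx)} = v\, g(\bfs_{a, \sigma, \bfx}),
\]
and hence
\[
\bfl_{b, \rho, g(\bfx)} = b\, v\, g(\bfs_{a, \sigma, \bfx}).
\]
This is the only place the hypothesis on $g$ enters.

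Next, because $\bfx$ is a pure morphic word generated by $f$ and $f\in \calM_{\bfx}$, Theorem~\ref{su 04-11-2012 12:52} guarantees that $\bfl_{a, \sigma, \bfx}$ is morphic. Stripping the first letter $a$ using Lemma~\ref{ke 19-12-2012 12:32} shows that $\bfs_{a, \sigma, \bfx}$ is morphic. Since $g \in \calM_{\bfx}$ forces $g$ to be non-erasing (as noted just after~\eqref{ti 15-01-2013 09:55}), applying the non-erasing morphism part of Lemma~\ref{ke 19-12-2012 12:32} gives that $g(\bfs_{a, \sigma, \bfx})$ is morphic. Finally, prepending the finite word $bv$ (using Lemma~\ref{ke 19-12-2012 12:32} once more) shows that $\bfl_{b, \rho, g(\bfx)}$ itself is morphic, as required.

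There is no real obstacle to overcome: the entire argument is essentially a bookkeeping composition of Lemma~\ref{pe 02-11-2012 14:01}, Theorem~\ref{su 04-11-2012 12:52}, and the closure properties in Lemma~\ref{ke 19-12-2012 12:32}. The only subtle point worth highlighting in the write-up is that Lemma~\ref{pe 02-11-2012 14:01} is invoked for the morphism $g$, which requires $g\in\calM_{\bfx}$, whereas Theorem~\ref{su 04-11-2012 12:52} is invoked internally with respect to $f$, which requires $f \in \calM_{\bfx}$ together with $f(\bfx)=\bfx$; this is exactly why the hypothesis insists on both membership conditions simultaneously.
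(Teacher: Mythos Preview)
Your proof is correct and follows essentially the same approach as the paper: apply Lemma~\ref{pe 02-11-2012 14:01} to $g$ to express $\bfs_{b,\rho,g(\bfx)}$ as $v\,g(\bfs_{a,\sigma,\bfx})$, then invoke Theorem~\ref{su 04-11-2012 12:52} together with the closure properties of Lemma~\ref{ke 19-12-2012 12:32}. Your write-up is simply more explicit about each closure step (stripping $a$, applying the non-erasing $g$, prepending $bv$) than the paper's one-line conclusion.
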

\begin{proof}
Let $\rho$ be a total order $\rho$ on $B$ and $b \in B$. 
According to Lemma~\ref{pe 02-11-2012 14:01}, there exists a total order $\sigma$ on $A$, a letter $a\in A$, and a word $v\in B^{*}$ such that
\[
\bfs_{b, \rho, g(\bfx)} = v g(\bfs_{a, \sigma, \bfx})
\]
Thus it follows from Theorem~\ref{su 04-11-2012 12:52} and Lemma~\ref{ke 19-12-2012 12:32} that 
$\bfl_{b, \rho, g(\bfx)}$ is morphic.
\end{proof}

\section{Extremal words of binary pure morphic words}

In this section we show that the extremal words of binary pure morphic words are morphic.

\begin{lemma}\label{su 04-11-2012 12:54}
Let $f \colon \bina^{*} \rightarrow \bina^{*}$ be a morphism such that $f(\o\i) \neq f(\i\o)$.  
Then $f \in \calM_{\bfx}$ for every $\bfx \in \bina^{\N}$.
\end{lemma}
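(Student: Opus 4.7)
My plan is to split into two cases based on whether one of $f(\o), f(\i)$ is a prefix of the other. First I note that $f(\oi) \neq f(\io)$ says $f(\o)f(\i) \neq f(\i)f(\o)$, so $f(\o)$ and $f(\i)$ do not commute; in particular both are non-empty (so $f$ is non-erasing) and $f(\o) \neq f(\i)$.

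When neither of $f(\o), f(\i)$ is a prefix of the other, I simply take $p_\o = f(\o)$ and $p_\i = f(\i)$. These are common prefixes of $f(\bfy)$ for $\bfy \in \calS_\bfx$ beginning with $\o$, respectively $\i$, and they are prefix-incomparable by the case assumption.

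In the harder case, after swapping $\o$ and $\i$ if necessary, $f(\o)$ is a proper prefix of $f(\i)$; write $f(\i) = f(\o) s$ with $s \neq \epsilon$. I let $L$ denote the longest common prefix of $\bsset{f(\bfz) : \bfz \in \bina^{\N}}$. Because $f(\o), f(\i)$ do not commute, $f(\o)^{\omega} \neq f(\i)^{\omega}$, so $L$ is finite. I set $p_\o = f(\o) L$ and $p_\i = f(\i) L = f(\o) s L$. That each $p_b$ is a prefix of $f(\bfy)$ for every $\bfy \in \calS_\bfx$ starting with $b$ is immediate: writing $\bfy = b \bfy'$ gives $f(\bfy) = f(b) f(\bfy')$, and $L$ is a prefix of $f(\bfy')$ by the definition of $L$.

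The heart of the argument is showing $p_\o$ and $p_\i$ are prefix-incomparable. Since $\nabs{p_\i} > \nabs{p_\o}$, this reduces to showing $L$ is not a prefix of $sL$. For this I plan to use the identity $L = f(\o) \cdot \operatorname{lcp}(L, sL)$, where $\operatorname{lcp}$ denotes the longest common prefix. The identity follows from partitioning $\bina^{\N}$ by the first letter: the subclass $\bcset{f(\bfz)}{\bfz_1 = \o}$ has LCP $f(\o) L$, the subclass $\bcset{f(\bfz)}{\bfz_1 = \i}$ has LCP $f(\o) s L$, and the LCP of these two classes is $L$ itself. If $L$ were a prefix of $sL$, then $\operatorname{lcp}(L, sL) = L$, and the identity would give $L = f(\o) L$, forcing $\nabs{f(\o)} = 0$---contradicting non-erasure. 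I do not anticipate any significant obstacle beyond carefully verifying this LCP identity; once it is in place, the conclusion is immediate.
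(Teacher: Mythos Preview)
Your proof is correct, and it takes a genuinely different route from the paper's.

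The paper splits according to whether $u=f(\o)$ is a prefix of $v^{\omega}$ where $v=f(\i)$. In Case~1 ($u$ not a prefix of $v^{\omega}$) one writes $u=v^{n}pas$, $v=pbt$ with $a\neq b$ and takes $p_{\o}=v^{n}pa$, $p_{\i}=v^{n}pb$; in Case~2 ($u$ a prefix of $v^{\omega}$) one writes $v=xy$, $u=v^{n}x$ and uses $xy\neq yx$ to locate the first mismatch between $(xy)^{n}xxy\cdots$ and $(xy)^{n}xyx\cdots$. Both cases produce explicit $p_{\o},p_{\i}$ of equal length by hand.

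Your argument instead introduces the longest common prefix $L$ of $\bigl\{f(\bfz):\bfz\in\bina^{\N}\bigr\}$ and exploits the self-referential identity $L=f(\o)\cdot\operatorname{lcp}(L,sL)$ obtained by splitting on the first letter. This is more conceptual: rather than finding the mismatch explicitly, you deduce prefix-incomparability from the length equation $\nabs{L}=\nabs{f(\o)}+\nabs{\operatorname{lcp}(L,sL)}$, which forbids $\operatorname{lcp}(L,sL)=L$. The trade-off is that the paper's argument yields $p_{\o},p_{\i}$ of the same length (often convenient in applications), while yours gives $\nabs{p_{\i}}>\nabs{p_{\o}}$; but for the statement at hand either suffices, and your recursion on $L$ is a clean way to avoid the somewhat ad~hoc case analysis.
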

\begin{proof}
Let us denote $u = f(\o)$ and $v = f(\i)$. We have two possibilities:

\emph{Case 1.} The word $u$ is not a prefix of $v^{\omega}$. Then there exists an integer $n\geq 0$ such that 
$u = v^{n} pas$ and $v = pbt$, where $p,s,t \in \bina^{*}$ and $a,b \in \bina$ with  $a \neq b$.
Now it is easy to see that, for every $\bfy \in \bina^{\N}$, the word $f(\i\bfy)$ begins with $v^{n}pb$ and $f(\o \bfy)$ begins with $v^{n}pa$.
Therefore $f \in \calM_{\bfx}$ because we may choose $p_{\i} = v^{n}pb$ and $p_{\o} = v^{n}pa$.

\emph{Case 2.} The word $u$ is a prefix of $v^{\omega}$. Then $v = xy$ and $u = v^{n}x$ for some integer $n\geq0$ and words $x,y$.
Now it is easy to see that, for every $\bfy \in \bina^{\N}$, the word $f(\o \bfy)$ begins with $(xy)^{n}xxy$ and $f(\i \bfy)$ begins with $(xy)^{n}xyx$.
Since $f(\o\i) \neq f(\i\o)$, it follows that $xy \neq yx$. Denote $xy = pas$ and $yx = pbt$ with $a,b$ distinct letters.
Then $f \in \calM_{\bfx}$ because we may let $p_{\o} = (xy)^{n}xpa$ and $p_{\i} = (xy)^{n}xpb$.
\end{proof}

\begin{theorem}\label{ma 14-01-2013 10:22}
If $\bfx \in \bina^{\N}$ is a binary pure morphic word,
then all extremal words of $\bfx$ are morphic.
\end{theorem}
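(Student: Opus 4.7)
The plan is to split the proof into two cases depending on whether $f(\o\i)$ equals $f(\i\o)$, where $f$ is the morphism generating $\bfx = f^{\omega}(a)$.

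In the first case, $f(\o\i) \neq f(\i\o)$. Then Lemma~\ref{su 04-11-2012 12:54} immediately yields $f \in \calM_{\bfx}$, and Theorem~\ref{su 04-11-2012 12:52} gives directly that all extremal words of $\bfx$ are morphic. So this case requires no additional work.

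In the second case, $f(\o)f(\i) = f(\i)f(\o)$, i.e.~$f(\o)$ and $f(\i)$ commute as words. I would invoke the classical fact (a consequence of Fine and Wilf, or Lyndon--Schützenberger) that two words commute if and only if they are both powers of a common word: there exist $w \in \bina^{+}$ and integers $p, q \geq 0$ such that $f(\o) = w^{p}$ and $f(\i) = w^{q}$. The degenerate possibilities $w = \epsilon$, or $p = q = 0$, or one of $p,q$ being zero while the starting letter $a$ maps to $\epsilon$, are ruled out by the requirement that $f^{\omega}(a)$ be a well-defined infinite word with $f(a) = ax$, $x \in A^{+}$, $f^{n}(x) \neq \epsilon$ for all $n$.

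Once this is set up, the goal is to show $\bfx$ is purely periodic. Since $f(a)$ starts with $a$ and lies in $w^{*}$, the word $w$ must begin with $a$. A short induction on $n$ shows $f^{n}(a) \in w^{*}$ for every $n \geq 1$ (each letter of $f^{n-1}(a)$ is mapped by $f$ to a power of $w$, so $f^{n}(a)$ is itself a power of $w$). Taking the limit, $\bfx = w^{\omega}$. Hence $\calS_{\bfx}$ consists only of the (finitely many) cyclic shifts of $w^{\omega}$, each of which is purely periodic, hence ultimately periodic, hence morphic by the remark preceding Lemma~\ref{ke 19-12-2012 12:32}. In particular every extremal word of $\bfx$ is morphic.

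There is no serious obstacle: the case $f(\o\i)\neq f(\i\o)$ is handed to us by the two preceding results, and the only genuine point to verify in the commuting case is that the iterates $f^{n}(a)$ really do stay inside $w^{*}$ and grow in length, so that $\bfx = w^{\omega}$. The only subtlety worth spelling out is the argument ruling out the degenerate morphisms that fail to generate an infinite word, which follows by inspecting the definition of a pure morphic word.
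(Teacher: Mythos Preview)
Your proposal is correct and follows exactly the same two-case split as the paper's proof; the paper simply asserts in one line that $f(\o\i)=f(\i\o)$ implies $\bfx$ is purely periodic, whereas you spell out the Lyndon--Sch\"utzenberger argument behind that assertion. The non-commuting case is handled identically via Lemma~\ref{su 04-11-2012 12:54} and Theorem~\ref{su 04-11-2012 12:52}.
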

\begin{proof}
Let $f$ be a binary morphism that generates $\bfx$. If $f(\o\i) = f(\i\o)$, then $\bfx$ is purely periodic, and the claim holds.
If $f(\o\i) \neq f(\i\o)$, then $f \in \calM_{\bfx}$ by Lemma~\ref{su 04-11-2012 12:54}, so that $\bfx$ is morphic by Theorem~\ref{su 04-11-2012 12:52}.
\end{proof}

There are exactly two total orders on the binary alphabet $\{\o,\i\}$;
let~$\rho$ denote the natural order $\o <_{\rho} \i$ and $\overline{\rho}$ the other order $\i <_{\overline{\rho}} \o$.
The following lemma simplifies the search for the extremal words of a binary pure morphic word, and we will
use it later.

\begin{lemma}\label{la 08-12-2012 22:28}
If $\bfx \in \{\o, \i\}^{\N}$ is a recurrent word in which both $\o$ and $\i$ occur, then
\begin{align}
\bfl_{\i, \rho, \bfx} &= \i \bfl_{\o, \rho, \bfx}   &  \bfl_{\o, \overline{\rho}, \bfx} & =  \o \bfl_{\i, \overline{\rho}, \bfx}.\label{ke 19-12-2012 16:11}\\
\intertext{Therefore also}
\bfs_{\i, \rho, \bfx} &= \o \bfs_{\o, \rho, \bfx}   &  \bfs_{\o, \overline{\rho}, \bfx} &= \i \bfs_{\i, \overline{\rho}, \bfx}.\label{ti 15-01-2013 13:49}
\end{align}
\end{lemma}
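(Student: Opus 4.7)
The two identities in~\eqref{ti 15-01-2013 13:49} follow immediately from those in~\eqref{ke 19-12-2012 16:11} by cancelling the first letter, so the content of the lemma lies in~\eqref{ke 19-12-2012 16:11}. Moreover, the second identity there is obtained from the first by interchanging $\o \leftrightarrow \i$ and $\rho \leftrightarrow \overline{\rho}$, so it suffices to establish $\bfl_{\i, \rho, \bfx} = \i \bfl_{\o, \rho, \bfx}$.

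I would reduce this to the \emph{key claim} that $\i \bfl_{\o, \rho, \bfx} \in \calS_{\bfx}$. Given the key claim, minimality of $\bfl_{\i, \rho, \bfx}$ yields $\bfl_{\i, \rho, \bfx} \leq_{\rho} \i \bfl_{\o, \rho, \bfx}$. Since $\bfl_{\o, \rho, \bfx}$ begins with $\o$, this forces $\bfl_{\i, \rho, \bfx}$ to begin with $\i\o$, so $\bfs_{\i, \rho, \bfx}$ lies in $\calS_{\bfx}$, begins with $\o$, and satisfies $\bfs_{\i, \rho, \bfx} \leq_{\rho} \bfl_{\o, \rho, \bfx}$; minimality of $\bfl_{\o, \rho, \bfx}$ gives the reverse inequality, and the resulting equality yields the desired identity.

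To prove the key claim, I show that for every finite prefix $w$ of $\bfl_{\o, \rho, \bfx}$, the word $\i w$ is a factor of $\bfx$. Suppose not, and pick such a $w$. Since $\i \in F(\bfx)$, the word $w$ is nonempty and begins with $\o$. By recurrence of $\bfx$, the factor $w$ occurs at arbitrarily high positions in $\bfx$; each such occurrence is preceded by some letter, which cannot be $\i$ (else $\i w \in F(\bfx)$), hence is $\o$. Therefore $\o w \in F(\bfx)$, and compactness supplies some $\bfy \in \calS_{\bfx}$ beginning with $\o w$. Since $\bfy$ begins with $\o$, minimality gives $\bfl_{\o, \rho, \bfx} \leq_{\rho} \bfy$. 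If $w$ contains an $\i$, write $w = \o^{j} \i v$ with $j \geq 1$; then $\bfy$ begins with $\o^{j+1}$ while $\bfl_{\o, \rho, \bfx}$ begins with $\o^{j} \i$, and since $\o <_{\rho} \i$ this gives $\bfy <_{\rho} \bfl_{\o, \rho, \bfx}$, a contradiction.

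The main obstacle is the remaining subcase $w = \o^{n}$, where the lex comparison above degenerates. I expect to handle it by iteration: the same reasoning forces $\o^{n+1}$, $\o^{n+2}$, $\ldots$ each to be a prefix of $\bfl_{\o, \rho, \bfx}$ (since $\i\o^{m} \notin F(\bfx)$ for every $m \geq n$, because $\i\o^{n}$ is a prefix of $\i\o^{m}$), so $\bfl_{\o, \rho, \bfx} = \o^{\omega}$ and $\o^{m} \in F(\bfx)$ for all $m$. A short auxiliary argument then produces the final contradiction: fix any position $p_{0}$ with $\bfx[p_{0}] = \i$; by recurrence $\o^{m}$ occurs at some position $q > p_{0}$; walking leftward from $q$ one must encounter an $\i$ before reaching $p_{0}$, producing a factor $\i \o^{k}$ with $k \geq m$, whence $\i \o^{m} \in F(\bfx)$. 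This contradicts the assumption $\i \o^{n} \notin F(\bfx)$ and completes the proof.
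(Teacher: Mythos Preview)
Your proof is correct. The reduction and the derivation of the identity from the key claim match the paper's argument almost verbatim. Where you diverge is in the proof of the key claim $\i\,\bfl_{\o,\rho,\bfx}\in\calS_{\bfx}$: the paper argues in one line that recurrence gives some left extension $a\,\bfl_{\o,\rho,\bfx}\in\calS_{\bfx}$, and $a=\o$ is impossible since $\o\,\bfl_{\o,\rho,\bfx}<_{\rho}\bfl_{\o,\rho,\bfx}$ would violate minimality. Your argument is a finite-prefix version of the same idea, split into cases. This makes it longer, but it does buy something: your Case~2 explicitly handles the degenerate situation $\bfl_{\o,\rho,\bfx}=\o^{\omega}$, which can genuinely occur for recurrent (as opposed to uniformly recurrent) $\bfx$, and in which the paper's strict inequality $\o\,\bfl_{\o,\rho,\bfx}<_{\rho}\bfl_{\o,\rho,\bfx}$ collapses to an equality. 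So your route is more laborious but slightly more careful on that edge case; otherwise the two proofs are the same in spirit.
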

\begin{proof}
Consider the first equation in~\eqref{ke 19-12-2012 16:11}. On the one hand, $\i \bfl_{\o, \rho, \bfx}$ is in $\calS_{\bfx}$ because  the recurrence of $\bfx$ implies that  
$a \bfl_{\o, \rho, \bfx}$ is in $\calS_{\bfx}$ for some $a\in \{\o, \i\}$ and if $a$ equaled $\o$, then the inequality $\o \bfl_{\o, \rho,\bfx} < \bfl_{\o, \rho,\bfx}$ would contradict the definition of $\bfl_{\o, \rho,\bfx}$. On the other hand,  $\i \bfl_{\o, \rho, \bfx}$ must equal $\bfl_{\i, \rho, \bfx}$ because otherwise
$\bfl_{\i, \rho, \bfx} < \i \bfl_{\o, \rho, \bfx}$, which implies $\i^{-1}\bfl_{\i, \rho, \bfx} < \bfl_{\o, \rho, \bfx}$, and this  contradicts the definition of $\bfl_{\o, \rho, \bfx}$. The second equation in~\eqref{ke 19-12-2012 16:11} is proved similarly. The identities~\eqref{ti 15-01-2013 13:49} follow immediately from~\eqref{ke 19-12-2012 16:11}.
\end{proof}

\section{Second main result}

In this section we assume that $\bfx = x_1 x_2 x_3 \cdots$, where $x_i \in A$, is a primitive morphic word and show
that the extremal words of $\bfx$ are primitive morphic. The techniques used in this section are completely different from those used in the previous sections, and they rely on the notion of return words.

Let $u$ be a factor of~$\bfx$. Let $i$ and $j$ be positive integers such that $i< j$ and the word $u$ 
is a prefix of $x_i x_{i+1} x_{i+2} \cdots$ and $x_j x_{j+1} x_{j+1} \cdots$ but not a prefix of $x_k x_{k+1} x_{k+2}\cdots$  whenever $i < k < j$.
Then the words 
$x_i x_{i+1} \cdots x_{j-1}$ 
and
$x_i x_{i+1} \cdots x_{j-1}u$
are called, respectively, a \emph{first return} and a \emph{first complete return} to~$u$.
The former is also simply referred to as a \emph{return word} of~$u$.

The set of all return words of $u$ in $\bfx$  is denoted by $\calR_u$.
Since $\bfx$ is primitive morphic,  the number of return words of $u$ is finite; see~\cite{Durand1998}.
Write $A_u = \nsset{1, 2, \ldots, \nabs{\calR_u}}$

Fix a bijection $\sigma_u \colon A_u  \rightarrow \calR_u$.  If $\bfy$ is in the shift orbit closure $\calS_{\bfx}$  of $\bfx$ and begins with $u$, then $\bfy$ can be uniquely factorized over the set $\calR_u$.
Thus there exists a unique sequence $\calD_u(\bfy) = a_1 a_2 a_3 \cdots$ with $a_i \in A_u$ such that
$\bfy = \sigma_u(a_1) \sigma_u(a_2) \sigma_u(a_3) \cdots$.
We call $\calD_u(\bfy)$ a \emph{derived word} of $\bfy$.

Let us define
\[
X_u = \bset{ \calD_u(\bfy) }{ \bfy \in \calS_{\bfx}, \quad \text{$\bfy$ begins with $u$}}
\]
The symbolic dynamical system  $(X_u, T)$, where $T$ is the shift map of infinite words,  is a so-called induced system 
of the subshift $(\calS_{\bfx}, T)$.  A result by  Holton and Zamboni~\cite[Thm.~8.2]{HolZam1999} says that the number 
of different sets $X_u$ is finite.

Another result we will need is the following characterization of primitive morphic words by Durand~\cite{Durand1998}:
An infinite word is primitive morphic if and only if the number of its derived  words is finite.

Now we are ready to prove the second main result of this paper.

\begin{theorem}\label{2013-07-13 19:29}
Let $\bfx \in A^{\N}$ be a primitive morphic word.  If $\bfy$ is an extremal word in the shift orbit closure of $\bfx$, then $\bfy$ is primitive morphic.
\end{theorem}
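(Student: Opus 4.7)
The plan is to invoke the characterization of Durand cited just before the theorem: it suffices to show that the set $\bset{\calD_u(\bfy)}{u \text{ is a prefix of } \bfy}$ is finite up to relabeling of the return-word alphabet.

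Write $\bfy = \bfl_{b, \sigma, \bfx}$ and fix a prefix $u$ of $\bfy$. I would first observe that no complete return $\sigma_u(i)u$ can be a proper prefix of another complete return $\sigma_u(j)u$: otherwise $u$ would reappear strictly inside $\sigma_u(j)$, contradicting the fact that $\sigma_u(j)$ is a \emph{first} return to $u$. This prefix-freeness lets me equip $A_u$ with a total order $\tau_u$ by declaring $i <_{\tau_u} j$ iff $\sigma_u(i)u <_{\sigma} \sigma_u(j)u$ in the ordinary lexicographic sense inherited from $\sigma$.

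The heart of the argument will be the claim that $\calD_u(\bfy)$ is itself an extremal word of the induced subshift $X_u$---specifically, the $\tau_u$-least element of $X_u$ beginning with its own first letter $a_1$. To prove this I would take any $\bfz = a_1 c_2 c_3 \cdots \in X_u$, unfold it to $\tilde\bfz := \sigma_u(a_1)\sigma_u(c_2)\sigma_u(c_3)\cdots \in \calS_{\bfx}$, observe that $\tilde\bfz$ begins with $u$ and hence with the letter $b$, and invoke the extremality of $\bfy$ to conclude $\bfy \leq_{\sigma} \tilde\bfz$. If $\calD_u(\bfy) \neq \bfz$, let $k$ be the first index at which $a_k \neq c_k$; the words $\bfy$ and $\tilde\bfz$ agree on the common prefix $\sigma_u(a_1)\cdots\sigma_u(a_{k-1})$, and their continuations begin with the complete returns $\sigma_u(a_k)u$ and $\sigma_u(c_k)u$ respectively. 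The prefix-freeness noted above reduces the $\sigma$-comparison of the continuations to the comparison of these two complete returns, forcing $a_k <_{\tau_u} c_k$ and hence $\calD_u(\bfy) <_{\tau_u} \bfz$.

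To finish, I would apply the Holton--Zamboni theorem: the subshifts $X_u$, as $u$ ranges over factors of $\bfx$, represent only finitely many isomorphism classes. Each such induced system has a finite alphabet, hence admits only finitely many total orders and only finitely many possible starting letters for an extremal word, so in total there are only finitely many candidates for $\calD_u(\bfy)$ up to relabeling. Durand's criterion then yields that $\bfy$ is primitive morphic. The step I expect to be the main obstacle is the careful verification in the previous paragraph that a first disagreement in the return-word factorization really does translate into a genuine $\sigma$-comparison of the corresponding complete returns; this is precisely why we must compare the products $\sigma_u(i)u$ rather than the return words $\sigma_u(i)$ themselves, and the argument collapses without the prefix-freeness lemma. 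The degenerate case in which $\bfy$ is ultimately periodic can be dealt with separately, since ultimately periodic words are trivially primitive morphic.
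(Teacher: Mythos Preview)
Your proposal is correct and follows essentially the same route as the paper: define an order on $A_u$ via the complete returns $\sigma_u(i)u$, show $\calD_u(\bfy)$ is extremal in the induced system $X_u$, and combine Holton--Zamboni's finiteness of induced systems with Durand's characterization. Your write-up is in fact more explicit than the paper's, which leaves the verification that $\calD_u(\bfy)$ is extremal in $X_u$ as ``readily checked''; your prefix-freeness observation and the first-disagreement argument are exactly what that check requires.
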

\begin{proof}
Suppose that $\bfy$ is extremal with respect to a total order $\leq$ on $A$.
Let $u$ be a prefix of $\bfy$.
There is a natural linear ordering $\leq_u$ on the set $A_u$ given by: for $m,n\in A_u$, write
$m \leq_u n$ if and only if $\sigma_u(m) u \leq \sigma_u(n) u$. Note that as $\sigma_u(m)u$ and $\sigma_u(n) u$ are distinct complete returns to $u$, they are never prefixes of one another and hence are comparable. 
It is readily checked that the derived word 
$\calD_u(\bfy)$ is an extremal word in $X_u$ with respect to $\leq_u$ for each prefix $u$ of $\bfy$.
Since  the number of extremal words in $X_u$ is finite, and there are only finitely many sets $X_u$, this means that there are only finitely
many derived words $\calD_u(\bfy)$ when $u$ is a prefix of $\bfy$. Thus $\bfy$ is primitive morphic.
\end{proof}

\section{Extremal words of the period-doubling word}\label{perioddoubling}

Let $f$ denote the morphism $\o \mapsto \o \i$,  $\i \mapsto \o\o$ and let $\bfd = f^{\omega}(\o)$ denote the 
\emph{period-doubling word}~\cite{Damanik2000,AllSha2003,Makarov2010}. According to Lemma~\ref{su 04-11-2012 12:54}, we have $f \in \calM_{\bfd}$.

Let $\rho$ denote the natural order  $\o <_{\rho} \i$ and $\overline{\rho}$ the reversed order $\i <_{\overline{\rho}} \o$.
Using the observation that neither $\o\o\o\o$ nor $\i\i$ occur in $\bfd$ and Lemma~\ref{la 08-12-2012 22:28},
the reader has no trouble verifying that  the following words start as shown.
\begin{align}
\bfs_{\o, \rho, \bfd} &=  \word{00100\cdots} & \bfs_{\i, \overline{\rho}, \bfd} &=  \word{010100\cdots} \label{ti 2013-01-15 15:08} \\ 
\bfs_{\i, \rho, \bfd} &= \word{0001\cdots}  & \bfs_{\o, \overline{\rho}, \bfd}  &=  \word{1010100\cdots}.\label{pe 2013-01-04 15:51}
\end{align}

Lemma~\ref{pe 02-11-2012 14:01} implies that $\bfs_{\o, \rho, \bfd} = v f(\bfs_{a, \sigma, \bfd})$ for some $a\in \bina$, proper suffix $v$ of $f(a)$, and $\sigma \in \{\rho, \overline{\rho}\}$. The only possible such factorization has to be of the form $\bfs_{\o, \rho, \bfd} =  \o f(\o\i \cdots)$,
so from \eqref{ti 2013-01-15 15:08} and \eqref{pe 2013-01-04 15:51} we see that $\bfs_{a, \sigma, \bfd} = \bfs_{\i, \overline{\rho},\bfd}$. Thus
\[
\bfs_{\o, \rho, \bfd} = \o f(\bfs_{\i, \overline{\rho},\bfd}). 
\]
We can deduce similarly that 
\begin{equation}\label{ke 16-01-2013 09:46}
\bfs_{\i, \overline{\rho}, \bfd} =  f(\o\o\i \cdots) =  f(\bfs_{\o, \rho, \bfd}). 
\end{equation}
Therefore $\bfs_{\o, \rho, \bfd} = \o f^{2}(\bfs_{\o, \rho, \bfd})$, which implies
\begin{equation}\label{pe 2013-01-05 15:05}
f^{2}(\bfl_{\o, \rho, \bfd}) = \o\i \bfl_{\o, \rho, \bfd}.
\end{equation}
We claim that $\bfl_{\o, \rho, \bfd}$ is the fixed point of the morphism $g \colon \o \mapsto \o\o\o\i$ and $\i \mapsto \o\i\o\i$.
Let us denote the unique fixed point of $g$ by $\bfz$, that is $\bfz = g^{\omega}(\o)$.
An easy induction proof shows that $\o\i g(w) = f^{2}(w) \o\i$ for all $w \in \bina^{*}$. Therefore
\[
\o\i \bfz = \o\i g(\bfz) = f^{2}(\bfz).
\]
Thus by \eqref{pe 2013-01-05 15:05}, both $\bfz$ and $\bfl_{\o, \rho, \bfd}$ satisfy the same relation $\o\i\bfx= f^{2}(\bfx)$,
which is easily seen to admit a unique solution; thus $\bfz = \bfl_{\o, \rho, \bfd}$.
Hence, using \eqref{ke 16-01-2013 09:46} and Lemma~\ref{la 08-12-2012 22:28}, the following result is obtained.
\begin{theorem} \label{14-01-2013 15:05}
Let $\bfd$ denote the period-doubling word and let $\bfz$ denote the unique fixed point of the morphism $\o \mapsto \o\o\o\i$, $\i \mapsto \o\i\o\i$.
Then we have
\begin{align*}
\bfl_{\o, \rho, \bfd} &= \bfz &  \bfl_{\i, \rho, \bfd} &= \i \bfz \\
\bfl_{\i, \overline{\rho}, \bfd} &= \o^{-1}f(\bfz)  & \bfl_{\o, \overline{\rho}, \bfd} &= f(\bfz).
\end{align*}
\end{theorem}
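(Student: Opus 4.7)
The bulk of the work has already been carried out in the discussion preceding the theorem, so the plan is simply to assemble the four equations from what has been derived, one application of~\eqref{ke 16-01-2013 09:46}, and two applications of Lemma~\ref{la 08-12-2012 22:28}.

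To establish $\bfl_{\o,\rho,\bfd}=\bfz$, I will appeal to the fact shown above that both $\bfz$ and $\bfl_{\o,\rho,\bfd}$ satisfy the equation $f^{2}(\bfx)=\o\i\bfx$, together with a short uniqueness check. Any solution $\bfx\in\bina^{\N}$ of this equation must begin with $\o$: since $f^{2}(\o)=\o\i\o\o$ and $f^{2}(\i)=\o\i\o\i$, equating $f^{2}(x_{1})$ with the first four letters $\o\i x_{1} x_{2}$ of $\o\i\bfx$ rules out $x_{1}=\i$. Once $x_{1}=\o$ is fixed, the equation $f^{2}(\bfx)=\o\i\bfx$ recursively determines the length-$(4k-2)$ prefix of $\bfx$ from its length-$k$ prefix (since $|f^{2}(w)|=4|w|$), giving uniqueness. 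This is the only step requiring any genuine verification; the rest is bookkeeping that glues together already-established facts.

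With $\bfl_{\o,\rho,\bfd}=\bfz$ in hand, the remaining three identities fall out quickly. Since $\bfd$ is recurrent and contains both letters (because $f$ is primitive), Lemma~\ref{la 08-12-2012 22:28} yields $\bfl_{\i,\rho,\bfd}=\i\,\bfl_{\o,\rho,\bfd}=\i\bfz$. For $\bfl_{\i,\overline{\rho},\bfd}$, equation~\eqref{ke 16-01-2013 09:46} gives $\bfs_{\i,\overline{\rho},\bfd}=f(\bfs_{\o,\rho,\bfd})=f(\o^{-1}\bfz)$, and combining this with the factorisation $f(\bfz)=f(\o)\,f(\o^{-1}\bfz)=\o\i\cdot f(\o^{-1}\bfz)$ rewrites the result as $\bfl_{\i,\overline{\rho},\bfd}=\i\,f(\o^{-1}\bfz)=\o^{-1}f(\bfz)$. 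Finally, a second application of Lemma~\ref{la 08-12-2012 22:28} gives $\bfl_{\o,\overline{\rho},\bfd}=\o\,\bfl_{\i,\overline{\rho},\bfd}=\o\cdot\o^{-1}f(\bfz)=f(\bfz)$, completing the proof.
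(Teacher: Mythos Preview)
Your proof is correct and follows essentially the same approach as the paper: you use the already-derived relation $f^{2}(\bfx)=\o\i\bfx$ and its uniqueness to get $\bfl_{\o,\rho,\bfd}=\bfz$, then obtain the remaining three identities from~\eqref{ke 16-01-2013 09:46} and Lemma~\ref{la 08-12-2012 22:28}. The only difference is that you spell out the uniqueness argument for $f^{2}(\bfx)=\o\i\bfx$ in detail, whereas the paper leaves it as ``easily seen.''
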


\section{Extremal words of the Chacon word}

The Chacon word \cite{Ferenczi1995,Fogg2002} is the fixed point $\bfc = f^{\omega}(\o)$, where $f$ is the morphism
$\o \mapsto \o\o\i\o$, $\i \mapsto \i$. Lemma~\ref{su 04-11-2012 12:54} guarantees that $f \in \calM_{\bfx}$.
Let $\rho$ denote the natural order  $\o <_{\rho} \i$ and $\overline{\rho}$ the reversed order $\i <_{\overline{\rho}} \o$ as  before.

As in Section~\ref{perioddoubling}, we use the observation that neither $\o\o\o\o$ nor $\i\i$ occur in $\bfc$ and Lemma~\ref{la 08-12-2012 22:28},
to deduce that the following words start as shown.

\begin{align*}
\bfs_{\o, \rho, \bfc} &= \o\o\i\o\o\o\i\o\i \cdots  &   \bfs_{\i, \overline{\rho}, \bfc} &= \o\i\o\o\i\o \cdots \\
\bfs_{\i, \rho, \bfc} &=  \o \o\o\i\o\o\o\i\o\i \cdots &   \bfs_{\o, \overline{\rho}, \bfc} &=  \i \o\i\o\o\i\o \cdots.
\end{align*}
Applying Lemma~\ref{pe 02-11-2012 14:01} as in the previous section, we find
\[
\bfs_{\o, \rho, \bfc} = f(\o\o\i \cdots) = f(\bfs_{\o, \rho, \bfc}).
\]
Since $\bfs_{\o, \rho, \bfc}$ begins with $\o$, we thus have $\bfs_{\o, \rho, \bfc} = \bfc$ and $\bfl_{\o, \rho, \bfc} = \o\bfc$.

Similarly, recalling that $\bfs_{\o, \overline{\rho}, \bfc} = \i \bfs_{\i, \overline{\rho}, \bfc}$ by Lemma~\ref{la 08-12-2012 22:28},
we deduce using Lemma~\ref{pe 02-11-2012 14:01} that
\[
\bfs_{\i, \overline{\rho}, \bfc} = \o f(\i\o \cdots) = \o f(\bfs_{\o, \overline{\rho}, \bfc}) = \o\i f(\bfs_{\i, \overline{\rho}, \bfc}).
\]
Therefore $\bfl_{\i, \overline{\rho}, \bfc}$ can be expressed as $\bfl_{\i, \overline{\rho}, \bfc} = \tau g^{\omega}(b)$, where $b$ is a new symbol,
$g$ is a morphism for which $g(b) = b \o\i$ and $g(a) = f(a)$ for $a\in \bina$, and $\tau(b) = \i$ and $\tau(a) = a$ for $a\in \bina$.
Thus a final application of Lemma~\ref{la 08-12-2012 22:28} allows us to wrap up the results of this section as follows.
\begin{theorem}\label{14-01-2013 15:07}
Let $\bfc$ denote the Chacon word. Then we have
\begin{align*}
\bfl_{\o, \rho, \bfc} &= \o \bfc  &   \bfl_{\i, \overline{\rho}, \bfc} &=  \tau g^{\omega}(b) \\
\bfl_{\i, \rho, \bfc} &=  \i \o \bfc  &   \bfl_{\o, \overline{\rho}, \bfc} &=  \o \tau g^{\omega}(b),
\end{align*} 
where $g$ and $\tau$ are the morphisms given above.
\end{theorem}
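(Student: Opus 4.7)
The plan is to mirror the strategy used for the period-doubling word in Section~\ref{perioddoubling}: use the combinatorial structure of $\bfc$ to compute short prefixes of each candidate extremal word, then apply Lemma~\ref{pe 02-11-2012 14:01} to obtain a functional equation. First I would record two standard facts about $\bfc$: it is (uniformly) recurrent, so Lemma~\ref{la 08-12-2012 22:28} applies, and the factors $\o\o\o\o$ and $\i\i$ do not occur in $\bfc$. The first fact immediately reduces the problem to identifying $\bfl_{\o, \rho, \bfc}$ and $\bfl_{\i, \overline{\rho}, \bfc}$, since the remaining two extremal words come for free by prepending the complementary letter.

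Next I would determine enough initial letters of $\bfs_{\o, \rho, \bfc}$ and $\bfs_{\i, \overline{\rho}, \bfc}$ by a short case analysis that forbids $\o^{4}$ and $\i\i$. With these prefixes in hand, the factorization $\bfs_{b, \rho, \bfc} = v\, f(\bfs_{a, \sigma, \bfc})$ promised by Lemma~\ref{pe 02-11-2012 14:01} is pinned down uniquely: for $\bfs_{\o, \rho, \bfc}$ the prefix matches $f(\o\o\i\cdots)$ with empty suffix $v$, so the inner extremal word itself must start with $\o$ and be smallest with respect to $\rho$, giving $\bfs_{\o, \rho, \bfc} = f(\bfs_{\o, \rho, \bfc})$. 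Since this infinite word begins with $\o$ and is a fixed point of $f$, by the uniqueness of $f^{\omega}(\o)$ it equals $\bfc$, hence $\bfl_{\o, \rho, \bfc} = \o\bfc$ and $\bfl_{\i, \rho, \bfc} = \i\o\bfc$ via Lemma~\ref{la 08-12-2012 22:28}.

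For $\bfs_{\i, \overline{\rho}, \bfc}$, the analogous argument combined with the identity $\bfs_{\o, \overline{\rho}, \bfc} = \i \bfs_{\i, \overline{\rho}, \bfc}$ from Lemma~\ref{la 08-12-2012 22:28} should yield the recursion $\bfs_{\i, \overline{\rho}, \bfc} = \o\, f(\bfs_{\o, \overline{\rho}, \bfc}) = \o\i\, f(\bfs_{\i, \overline{\rho}, \bfc})$. This is \emph{not} a fixed point of $f$, because each unrolling prepends a fresh $\o\i$ block; so to express $\bfl_{\i, \overline{\rho}, \bfc}$ as a morphic word I would introduce a fresh letter $b$ and the morphism $g$ with $g(b) = b\o\i$ and $g|_{\bina} = f$, together with the coding $\tau$ sending $b \mapsto \i$ and acting identically on $\bina$. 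The word $\tau g^{\omega}(b)$ begins with $\i$ and satisfies the same recursion, and uniqueness of infinite-word solutions of the contracting identity (once the first letter is fixed) gives $\bfl_{\i, \overline{\rho}, \bfc} = \tau g^{\omega}(b)$, with $\bfl_{\o, \overline{\rho}, \bfc} = \o\tau g^{\omega}(b)$ following by Lemma~\ref{la 08-12-2012 22:28}.

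The main obstacle I anticipate is the second case: because $f$ fixes $\i$, multiple factorizations compatible with Lemma~\ref{pe 02-11-2012 14:01} have very similar beginnings, so one has to compute the prefix of $\bfs_{\i, \overline{\rho}, \bfc}$ carefully enough, using both forbidden factors, to single out the correct factorization. Once that is done, the packaging of a non-self-similar recursion as a fixed point of an enlarged morphism (via the auxiliary symbol $b$) is a routine trick, and Lemma~\ref{ke 19-12-2012 12:32} confirms that the resulting coding $\tau g^{\omega}(b)$ is morphic.
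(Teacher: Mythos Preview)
Your proposal is correct and follows essentially the same approach as the paper: the paper likewise uses the forbidden factors $\o\o\o\o$ and $\i\i$ together with Lemma~\ref{la 08-12-2012 22:28} to compute short prefixes, applies Lemma~\ref{pe 02-11-2012 14:01} to obtain $\bfs_{\o,\rho,\bfc}=f(\bfs_{\o,\rho,\bfc})$ and $\bfs_{\i,\overline{\rho},\bfc}=\o f(\bfs_{\o,\overline{\rho},\bfc})=\o\i f(\bfs_{\i,\overline{\rho},\bfc})$, and then packages the latter via the auxiliary letter $b$, the morphism $g$, and the coding $\tau$ exactly as you describe. Your anticipated obstacle (disambiguating the factorization when $f$ fixes $\i$) is real but handled just as you suggest, by computing a long enough prefix.
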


\section{The least word in the shift orbit closure of the Rudin-Shapiro word}

In this section, we give a new proof for the form of the lexicographically smallest word in the
shift orbit closure  of the Rudin-Shapiro word. This result was first derived in~\cite{Currie2011}.
Considerations in this section are more involved than the ones in the previous sections because
a coding is needed in the definition of the Rudin-Shapiro word.
In what follows, we denote the natural order on letters $\o, \i, \ka,\ko$ by $\rho$. Thus we have
$\o <_{\rho} \i <_{\rho} \ka <_{\rho} \ko$.

Let $f$ and $g$ be the morphisms
\[
f \colon 
\begin{cases}
\o \mapsto \o\i\\
\i \mapsto \o\ka\\
\ka \mapsto \ko\i\\
\ko \mapsto \ko\ka 
\end{cases}
\qqtext{and}
g \colon
\begin{cases}
\o \mapsto \o\\
\i \mapsto \o\\
\ka \mapsto \i\\
\ko \mapsto \i 
\end{cases}
\]
Denote
\begin{align*}
\bfu = f^{\omega}(\o) &= \word{0102013101023202010201313231013101020131\cdots} \\
\intertext{and}
\bfw = g(\bfu) &= \word{0001001000011101000100101110001000010010\cdots}.
\end{align*}
Then $\bfw$ is the Rudin-Shapiro word, and our goal is to  prove the identity 
$\bfl_{\o, \rho, \bfw} = \o \bfw$.
To that end, we need the next two lemmas. Let us denote $\Sigma_{4} = \{ \o, \i, \ka, \ko\}$.

\begin{lemma}\label{la 15-12-2012 15:14}
Let $\sigma$ and $\sigma'$ be two total orders on  $\Sigma_{4}$. If $\sigma$ and $\sigma'$ order the pairs $(\o,\ko)$ and $(\i, \ka)$ in the same way, 
i.e., $\o <_{\sigma} \ko$ if and only if $\o <_{\sigma'} \ko$ and $\i <_{\sigma} \ka$ iff $\i <_{\sigma'} \ka$,
then $\bfl_{d, \sigma, \bfu} = \bfl_{d, \sigma', \bfu}$ for all $d \in \Sigma_{4}$.
\end{lemma}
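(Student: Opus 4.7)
The plan is to exploit a parity structure on $\bfu$ induced by $f$. First I would read off from $f$ the observation that, for each $a\in\Sigma_4$, the word $f(a)$ has its first letter in $\{\o,\ko\}$ and its second letter in $\{\i,\ka\}$. A straightforward induction on $n$ then lifts this to $f^n(\o)$, showing that in $\bfu$ the letters at even-indexed positions belong to $\{\o,\ko\}$ while those at odd-indexed positions belong to $\{\i,\ka\}$.

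Next I would deduce that every length-$2$ factor of $\bfu$ contains exactly one letter from each of $\{\o,\ko\}$ and $\{\i,\ka\}$. Since every factor of any $\bfy\in\calS_\bfu$ is a factor of $\bfu$, the same alternation propagates: consecutive letters of such a $\bfy$ always lie in opposite classes. Consequently, once the initial letter $d$ of $\bfy$ is fixed, the class to which $y_k$ belongs is determined by the parity of $k$, independently of which $\bfy\in\calS_\bfu$ beginning with $d$ we pick.

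Finally, given any two words $\bfy,\bfy'\in\calS_\bfu$ both starting with $d$, the first position at which they disagree (if any) must carry two letters from the same class, so the comparison of $\bfy$ and $\bfy'$ under $\sigma$ invokes only the restriction of $\sigma$ to one of the pairs $(\o,\ko)$ or $(\i,\ka)$. By hypothesis this restriction agrees with that of $\sigma'$, so the two orders induce exactly the same linear ordering on $\{\bfy\in\calS_\bfu:\bfy\text{ begins with }d\}$ and therefore have the same lexicographic minimum; hence $\bfl_{d,\sigma,\bfu}=\bfl_{d,\sigma',\bfu}$. The only step requiring real verification is the parity structure from the first paragraph; once it is in place, the rest of the argument is essentially bookkeeping.
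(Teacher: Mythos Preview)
Your proposal is correct and follows essentially the same idea as the paper's proof: both arguments hinge on the fact that consecutive letters of $\bfu$ (hence of any $\bfy\in\calS_{\bfu}$) alternate between the classes $\{\o,\ko\}$ and $\{\i,\ka\}$, so that at the first point of disagreement between two candidates the competing letters lie in the same class, where $\sigma$ and $\sigma'$ agree. The only cosmetic difference is how this alternation is verified---you derive it from the parity structure of $f$ (each $f(a)$ has first letter in $\{\o,\ko\}$ and second in $\{\i,\ka\}$), whereas the paper instead records the equivalent statement as a list of forbidden length-$2$ factors of $\bfu$.
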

\begin{proof}
Suppose that $\bfl_{d, \sigma, \bfu} = u a \bft$ and $\bfl_{d, \sigma', \bfu} = u b \bft'$ with distinct letters $a,b \in \Sigma_{4}$.
Since $\sigma$ and $\sigma'$ agree on $(\o,\ko)$ and $(\i, \ka)$, it follows that either $a \in \{ \o, \ko\}$ and $b \in \{\i, \ka\}$, or vice versa.
Furthermore, if $c$ denotes the last letter of $u$, then both $ca$ and $cb$ occur in $\bfu$. This contradicts the fact that none of 
the words $\o\o$, $\o\ko$, $\i\i$, $\i\ka$, $\ka\i$, $\ka\ka$, $\ko\o$, $\ko\ko$
occur in $\bfu$.
\end{proof}

The next lemma is interesting in its own right. It was also proved in~\cite{Currie2011}.

\begin{lemma}\label{ma 17-12-2012 15:25}
 We have $\bfl_{\o, \rho, \bfu} = \bfu$.
\end{lemma}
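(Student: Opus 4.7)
The plan is to apply Lemma~\ref{pe 02-11-2012 14:01} to reduce the identification of $\bfl_{\o, \rho, \bfu}$ to a unique fixed-point equation. I will first check that $f \in \calM_{\bfu}$, then use the special structure of $f$ together with the elementary upper bound $\bfl_{\o, \rho, \bfu} \le \bfu$ to select which of the possible recursive identities actually holds.

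To verify $f \in \calM_\bfu$ I would enumerate the length-two factors of $\bfu$: inside images one finds $\o\i, \o\ka, \ko\i, \ko\ka$, and at image boundaries one finds $\i\o, \i\ko, \ka\o, \ka\ko$. From this the length-two prefix of $f(\bfy)$ for any $\bfy \in \calS_\bfu$ beginning with a prescribed letter is forced:
\begin{equation*}
p_\o = \o\i, \qquad p_\i = \o\ka, \qquad p_\ka = \ko\i, \qquad p_\ko = \ko\ka,
\end{equation*}
and these four words are pairwise incomparable. Since $f(\bfu) = \bfu$, Lemma~\ref{pe 02-11-2012 14:01} then gives $\bfs_{\o, \rho, \bfu} = v f(\bfs_{a, \sigma, \bfu})$, with $\sigma$ determined by $x <_\sigma y \iff p_x <_\rho p_y$. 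Because the lex order on $\{p_\o, p_\i, p_\ka, p_\ko\}$ coincides with $\o <_\rho \i <_\rho \ka <_\rho \ko$, we obtain $\sigma = \rho$. Next I would use Lemma~\ref{ma 15-10-2012 13:24} to observe that $u := \o v$ is a nonempty suffix of $f(a)$ starting with $\o$; since $|f(a)| = 2$ and no $f(a)$ ends in $\o$, necessarily $u = f(a)$ and $a \in \{\o, \i\}$. This leaves precisely two candidate identities:
\begin{equation*}
\bfs_{\o, \rho, \bfu} = \i \, f(\bfs_{\o, \rho, \bfu}) \quad \text{or} \quad \bfs_{\o, \rho, \bfu} = \ka \, f(\bfs_{\i, \rho, \bfu}).
\end{equation*}

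To eliminate the second identity I would note that $\bfu$ itself lies in $\calS_\bfu$ and starts with $\o$, so $\bfl_{\o, \rho, \bfu} \le \bfu = \o\i\cdots$, forcing the second letter of $\bfl_{\o, \rho, \bfu}$ to be at most $\i$. Since $\o\o$ is not a factor of $\bfu$, this letter cannot be $\o$, hence it is exactly $\i$, ruling out the $\ka$-branch. Therefore $\bfs_{\o, \rho, \bfu} = \i f(\bfs_{\o, \rho, \bfu})$, equivalently $\bfl_{\o, \rho, \bfu} = f(\bfl_{\o, \rho, \bfu})$. Iterating yields $\bfl_{\o, \rho, \bfu} = f^n(\bfl_{\o, \rho, \bfu})$ for every $n$, so $f^n(\o)$ is a prefix of $\bfl_{\o, \rho, \bfu}$ for all $n$; therefore $\bfl_{\o, \rho, \bfu} = f^\omega(\o) = \bfu$. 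The main obstacle is the bookkeeping that verifies $f \in \calM_\bfu$ and narrows the Lemma~\ref{pe 02-11-2012 14:01} output down to the two candidates above; once there, the comparison with $\bfu$ closes the argument.
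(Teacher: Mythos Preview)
Your argument is correct and reaches the same conclusion as the paper, but the route differs in two noteworthy ways.

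First, you determine the order $\sigma$ \emph{before} identifying $a$ and $v$: by reading off the explicit construction of $\sigma$ in the proof of Lemma~\ref{pe 02-11-2012 14:01} (namely $x<_\sigma y \iff p_x<_\rho p_y$) and checking that the lexicographic order on $\{p_\o,p_\i,p_\ka,p_\ko\}$ reproduces $\rho$, you get $\sigma=\rho$ immediately. The paper instead pins down $v=\i$ and $a=\o$ first from the computed prefix $\bfs_{\o,\rho,\bfu}=\word{10201}\cdots$, and only afterwards argues $\sigma=\rho$ via the auxiliary Lemma~\ref{la 15-12-2012 15:14} (verifying $\o<_\sigma\ko$ and $\i<_\sigma\ka$). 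Your shortcut bypasses Lemma~\ref{la 15-12-2012 15:14} entirely, which is a genuine simplification; the only caveat is that you are relying on the specific $\sigma$ built in the \emph{proof} of Lemma~\ref{pe 02-11-2012 14:01}, not merely on its statement.

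Second, to discard the branch $v=\ka$ you use the inequality $\bfl_{\o,\rho,\bfu}\le_\rho\bfu$ together with the absence of $\o\o$, whereas the paper simply computes the first few letters of $\bfs_{\o,\rho,\bfu}$ by hand. Both work; yours is a touch more conceptual, theirs more direct. From that point on the two proofs coincide: $\bfs_{\o,\rho,\bfu}=\i f(\bfs_{\o,\rho,\bfu})$ gives $\bfl_{\o,\rho,\bfu}=f(\bfl_{\o,\rho,\bfu})$, and the unique fixed point of $f$ beginning with $\o$ is $\bfu$.
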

\begin{proof}
Since clearly $f \in \calM_{\bfu}$, Lemma~\ref{pe 02-11-2012 14:01} implies that there exist
a letter $a \in \Sigma_{4}$, a proper suffix $v$ of $f(a)$, and a total order $\sigma$ on $\Sigma_{4}$
such that $\bfs_{\o, \rho, \bfu} = v f(\bfs_{a, \sigma, \bfu})$.
An easy case analysis based on the observation that $\o\o$ does not occur in $\bfu$ yields
$\bfs_{\o, \rho, \bfu} = \word{10201}\cdots = \i f(\i\o \cdots)$,
and hence
\[
v = \i \qqtext{and} \bfs_{a, \sigma, \bfu} = \i\o \cdots.
\]
Since $v = \i$ is a suffix of $f(a)$, we have $a=\o$ or $a =\ka$. Furthermore since
$\bfl_{a, \sigma, \bfu}$ starts with $a\i$, we must have 
$a = \o$ because $\ka \i$ does not occur in $\bfu$. Thus $\bfs_{\o, \rho, \bfu} = \i f(\bfs_{\o, \sigma, \bfu})$.

Next we claim  $\bfs_{\o, \sigma, \bfu} = \bfs_{\o, \rho, \bfu}$. We prove this by showing that $\o<_{\sigma} \ko$ and $\i <_{\sigma} \ka$;
then the claim follows from Lemma~\ref{la 15-12-2012 15:14}. If, contrary to what we want to show,  we have $\ka <_{\sigma} \i$, then $\bfl_{\o, \sigma, \bfu}$ would begin with $\o \ka$, contradicting the fact that $\bfs_{\o, \sigma, \bfu}$ begins with $\i$.
Consequently we have $\i <_{\sigma} \ka$. Furthermore if $\ko <_{\sigma} \o$, then $\bfl_{\o, \sigma, \bfu}$ would begin with $\o\i\ko$,
contradicting the fact that $\bfs_{\o, \sigma, \bfu}$ begins with $\i\o$.
Therefore $\bfs_{\o, \sigma, \bfu} = \bfs_{\o, \rho, \bfu}$.

Now the identity  $\bfs_{\o, \rho, \bfu} = \i f(\bfs_{\o, \rho, \bfu})$ implies $\bfl_{\o, \rho, \bfu} =  f(\bfl_{\o, \rho, \bfu})$,
so that $\bfl_{\o, \rho, \bfu}$ is the unique iterative fixed point of $f$ that starts with \o, that is $\bfl_{\o, \rho, \bfu} = \bfu$. 
\end{proof}

Finally, we are ready to prove the main result of this subsection.

\begin{theorem}
Let $\bfw$ denote the Rudin--Shapiro word. Then $\bfl_{\o, \rho, \bfw} = \o \bfw$. 
\end{theorem}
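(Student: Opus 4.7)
The plan is to reduce the problem for $\bfw$ to one for $\bfu$ via the coding $g$, and then to invoke Lemma~\ref{ma 17-12-2012 15:25} together with an analogous computation for words in $\calS_{\bfu}$ starting with $\i$. Applying Lemma~\ref{ma 15-10-2012 13:24} with the morphism there taken to be $g$: since $g$ is a coding, the ``nonempty suffix of $g(a)$'' in that lemma is simply $g(a)$ itself, so every $\bfy \in \calS_{\bfw}$ has the form $g(\bfy')$ for some $\bfy' \in \calS_{\bfu}$. Hence $\bfl_{\o, \rho, \bfw}$ equals the $\rho$-minimum of $g(\bfy')$ over those $\bfy' \in \calS_{\bfu}$ with $\bfy'_{1} \in g^{-1}(\o) = \{\o, \i\}$.

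The central structural observation is that the forbidden factors listed in the proof of Lemma~\ref{la 15-12-2012 15:14} force every element of $\calS_{\bfu}$ to alternate strictly between the sets $\{\o, \ko\}$ and $\{\i, \ka\}$. On each of these two sets $g$ is strictly $\rho$-order-preserving, since $g(\o) = \o <_{\rho} \i = g(\ko)$ and $g(\i) = \o <_{\rho} \i = g(\ka)$. Consequently, for any $\bfy', \bfy'' \in \calS_{\bfu}$ sharing a first letter (hence the same alternation phase), $\bfy' <_{\rho} \bfy''$ if and only if $g(\bfy') <_{\rho} g(\bfy'')$. Minimising $g(\bfy')$ subject to $\bfy'_{1} = \o$ (respectively $\bfy'_{1} = \i$) therefore reduces to computing $\bfl_{\o, \rho, \bfu}$ (respectively $\bfl_{\i, \rho, \bfu}$).

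The case $\bfy'_{1} = \o$ is immediate from Lemma~\ref{ma 17-12-2012 15:25}: $\bfl_{\o, \rho, \bfu} = \bfu$, yielding the candidate minimum $g(\bfu) = \bfw$. For $\bfy'_{1} = \i$ I would prove $\bfl_{\i, \rho, \bfu} = \i\bfu$; the main obstacle is to verify $\i\bfu \in \calS_{\bfu}$. Since $\bfu = f(\bfu)$ and each $f(a)$ has length $2$, the morphism $f$ sends factors of $\bfu$ to factors of $\bfu$; because $\i\o$ is a factor of $\bfu$ (at positions $2,3$), $f^{2k}(\i\o) = f^{2k}(\i)\,f^{2k}(\o) \in F(\bfu)$ for every $k \ge 1$. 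An easy induction starting from $f^{2}(\i) = \o\i\ko\i$ shows that $f^{2k}(\i)$ ends in $\i$, while the fixed-point identity gives $f^{2k}(\o)$ as the length-$4^{k}$ prefix of $\bfu$. Therefore $\i$ followed by arbitrarily long prefixes of $\bfu$ is a factor of $\bfu$, so $\i\bfu \in \calS_{\bfu}$. Minimality is then straightforward: any competitor $\i\bfs \in \calS_{\bfu}$ with $\i\bfs \le_{\rho} \i\bfu$ must have $\bfs_{1} \in \{\o, \ko\}$, and either $\bfs_{1} = \o$ together with $\bfs \le_{\rho} \bfu = \bfl_{\o, \rho, \bfu}$ forces $\bfs = \bfu$, or $\bfs_{1} = \ko$ gives $\bfs >_{\rho} \bfu$, contradicting $\bfs \le_{\rho} \bfu$.

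It remains to compare the two candidate minima $g(\bfu) = \bfw$ and $g(\i\bfu) = g(\i)\,g(\bfu) = \o\bfw$. Both start with $\o\o\o$, but the fourth letter of $\bfw$ is $\i$ whereas that of $\o\bfw$ is $\o$, so $\o\bfw <_{\rho} \bfw$, and therefore $\bfl_{\o, \rho, \bfw} = \o\bfw$, as claimed.
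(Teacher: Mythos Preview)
Your proof is correct and takes a genuinely different route from the paper's. The paper works with the composition $h = g \circ f$, which is globally $\rho$-preserving on $\Sigma_4$; it applies Lemma~\ref{ma 15-10-2012 13:24} with $h$, deduces that $\bfl_{\o,\rho,\bfw} = \o\,h(\bfz)$ for some $\ka\bfz \in \calS_{\bfu}$, checks that $\ka\bfu \in \calS_{\bfu}$, and then uses Lemma~\ref{ma 17-12-2012 15:25} plus the order-preservation of $h$ to compare. You instead apply Lemma~\ref{ma 15-10-2012 13:24} with the coding $g$ itself, and compensate for $g$'s non-injectivity by exploiting the alternation of $\calS_{\bfu}$ between $\{\o,\ko\}$ and $\{\i,\ka\}$ (so that $g$ is strictly order-preserving on each ``phase class''); this forces you to compute $\bfl_{\i,\rho,\bfu}$ in addition to invoking Lemma~\ref{ma 17-12-2012 15:25}, and then to compare the two $g$-images directly. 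The paper's argument is a bit slicker in that it avoids the separate computation of $\bfl_{\i,\rho,\bfu}$ and the alternation discussion; yours is arguably more transparent about why the non-injective coding $g$ still behaves well on $\calS_{\bfu}$, and it yields the extra fact $\bfl_{\i,\rho,\bfu} = \i\bfu$ along the way.
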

\begin{proof}
Let $h = g \circ f$ be the composition of $g$ and $f$. Then
\[
h \colon \quad \o \mapsto \o\o \qquad \i \mapsto \o\i \qquad \ka \mapsto \i\o \qquad \ko \mapsto \i\i.
\]

According to Lemma~\ref{ma 15-10-2012 13:24}, 
there exist a letter $a \in \Sigma_{4}$ and an infinite word $\bfz \in \Sigma_{4}^{\N}$ such that
$a\bfz \in \calS_{\bfu}$ and $\bfl_{\o, \rho, \bfw} = u h(\bfz)$, where $u$ is a nonempty suffix of $h(a)$.
Since $\bfl_{\o, \rho, \bfw}$ clearly starts with $\o\o\o\o$ and $\o\o$ does not occur in $\bfu$, 
it follows that $u = \o$, $\bfz = \o\i\cdots$, and $a = \ka$. 

On the other hand, it is easy to see that $\ka \bfu \in \calS_{\bfu}$. Since 
$\bfu = \bfl_{\o, \rho, \bfu}$ by Lemma~\ref{ma 17-12-2012 15:25}, we have $\bfu \leq_{\rho} \bfz$, and so $\ka \bfu \leq_{\rho} \ka \bfz$. 
Furthermore, since $h$ preserves~$\rho$,
that is to say if $x,y \in \{\o,\i,\ka, \ko \}^{*}$ with $x <_{\rho} y$, then $h(x) <_{\rho} h(y)$,
we have $h(\ka \bfu) \leq_{\rho} h(\ka \bfz)$, which gives
$\o h(\bfu) \leq_{\rho} \o h(\bfz) = \bfl_{\o, \rho, \bfw}$.
Hence we must have $\o h(\bfu) = \bfl_{\o, \rho, \bfw}$, and so
\[
\bfl_{\o, \rho, \bfw} = \o h(\bfu) = \o g(\bfu) = \o \bfw.
\]
\end{proof}

\section{Conclusion}\label{conclusion}

We have shown that if $\bfx = g\nparen{f^{\omega}(a)}$  such that either $f,g \in \calM_{f^{\omega}(a)}$ or $f$
is primitive, then all extremal words of $\bfx$ are morphic.
We also know from a previous work of Allouche et al.~\cite{AllRamSha2009} that the extremal words of automatic words are  automatic. 
However, there are morphic words that do not fall into any of these classes,
so it remains an open problem whether all extremal words of all morphic words are morphic.

\end{document}